\numberwithin{equation}{section}
\numberwithin{figure}{section}
\theoremstyle{plain}
\newtheorem{thm}{\protect\theoremname}
\theoremstyle{plain}
\newtheorem{conjecture}[thm]{\protect\conjecturename}
\theoremstyle{plain}
\newtheorem{cor}[thm]{\protect\corollaryname}
\theoremstyle{plain}
\newtheorem{lem}[thm]{\protect\lemmaname}
\theoremstyle{plain}
\newtheorem*{thm*}{\protect\theoremname}
\theoremstyle{remark}
\newtheorem*{claim*}{\protect\claimname}
\providecommand{\claimname}{Claim}
\providecommand{\conjecturename}{Conjecture}
\providecommand{\corollaryname}{Corollary}
\providecommand{\lemmaname}{Lemma}
\providecommand{\theoremname}{Theorem}
\begin{document}

\title{Proof of the exact overlaps conjecture for systems with algebraic
contractions}

\author{\noindent Ariel Rapaport}

\subjclass[2000]{\noindent Primary: 28A80, Secondary: 37C45.}
\begin{abstract}
We establish the exact overlaps conjecture for iterated functions
systems on the real line with algebraic contractions and arbitrary
translations.
\end{abstract}

\maketitle

\section{\label{sec:Introduction}Introduction}

\subsection{\label{subsec:Background}Background}

Let $m\ge1$ and $\Phi=\{\varphi_{j}(x)=\lambda_{j}x+t_{j}\}_{j=0}^{m}$
be a finite set of contracting similarities of $\mathbb{R}$, so that
$0\ne\lambda_{j}\in(-1,1)$ and $t_{j}\in\mathbb{R}$ for each $0\le j\le m$.
Such a collection $\Phi$ is called a self-similar iterated function
system (IFS). It is well known that there exists a unique nonempty
compact $K\subset\mathbb{R}$, called the attractor of $\Phi$, which
satisfies the relation,
\begin{equation}
K=\cup_{j=0}^{m}\varphi_{j}(K)\:.\label{eq:def of K}
\end{equation}
The set $K$ is said to be self-similar.

Suppose additionally that $p=(p_{j})_{j=0}^{m}$ is a probability
vector with strictly positive coordinates. Then there exists a unique
Borel probability measure $\mu=\mu(\Phi,p)$ on $\mathbb{R}$ such
that,
\[
\mu=\sum_{j=0}^{m}p_{j}\cdot\varphi_{j}\mu,
\]
where $\varphi_{j}\mu$ is the push-forward of $\mu$ by $\varphi_{j}$.
Its support is equal to $K$, it is the unique stationary probability
measure for the random walk moving from $x\in\mathbb{R}$ to $\varphi_{j}(x)$
with probability $p_{j}$, and it is called the self-similar measure
corresponding to $\Phi$ and $p$.

The dimension theory of self-similar measures is a central area of
research in fractal geometry. It was proven by Feng and Hu \cite{FH}
that $\mu$ is always exact dimensional. That is, there exists a value
$\dim\mu\in[0,1]$, called the dimension of $\mu$, such that,
\[
\dim\mu=\underset{\delta\downarrow0}{\lim}\:\frac{\log\mu(x-\delta,x+\delta)}{\log\delta}\text{ for }\mu\text{-a.e. }x\in\mathbb{R}\:.
\]
As proven in \cite{FLR}, $\dim\mu$ agrees with the value given to
$\mu$ by other commonly used notions of dimension, such as the Hausdorff,
packing and entropy dimensions.

It turns out that in most cases $\dim\mu$ satisfies a certain formula
in terms of $p$ and the contractions vector $\lambda=(\lambda_{j})_{j=0}^{m}$.
Denote by $H(p)$ the entropy of $p$ and by $\chi$ the Lyapunov
exponents corresponding to $p$ and $\lambda$. That is,
\begin{equation}
H(p)=-\sum_{j=0}^{m}p_{j}\log p_{j}\text{ and }\chi=-\sum_{j=0}^{m}p_{j}\log|\lambda_{j}|,\label{eq:def of ent and lyap exp}
\end{equation}
where here and everywhere else in this paper the base of the $\log$
function is $2$. Set,
\begin{equation}
\beta=\beta(\Phi,p)=\min\{1,H(p)/\chi\},\label{eq:def of beta}
\end{equation}
then it is not hard to show that $\beta$ is always an upper bound
for $\dim\mu$ and that it is equal to $\dim\mu$ whenever the union
in (\ref{eq:def of K}) is disjoint. Moreover, it was proven by Jordan,
Pollicott and Simon \cite{JPS} that if $\lambda$ is kept fixed and
$|\lambda_{j}|\in(0,\frac{1}{2})$ for each $0\le j\le m$, then $\dim\mu=\beta$
for Lebesgue almost every selection of the translations $(t_{j})_{j=0}^{m}\in\mathbb{R}^{m+1}$.
A version of this result for sets was first established by Falconer
\cite{Fa}.

On the other hand there are cases in which it is obvious that dimension
drop occurs, i.e. that $\dim\mu$ is strictly less than $\beta$.
Denote the index set $\{0,...,m\}$ by $\Lambda$. For $n\ge1$ and
a word $j_{1}...j_{n}=w\in\Lambda^{n}$ set,
\begin{equation}
\varphi_{w}=\varphi_{j_{1}}\circ...\circ\varphi_{j_{n}}\text{ and }\lambda_{w}=\lambda_{j_{1}}\cdot...\cdot\lambda_{j_{n}}\:.\label{eq:def of compo}
\end{equation}
The IFS $\Phi$ is said to have exact overlaps if the semigroup generated
by its elements is not free. Since the members of $\Phi$ are contractions,
this is equivalent to the existence of $n\ge1$ and distinct words
$w_{1},w_{2}\in\Lambda^{n}$ with $\varphi_{w_{1}}=\varphi_{w_{2}}$.
It is not difficult to see that $\dim\mu<\beta$ whenever $\Phi$
has exact overlaps and $\dim\mu<1$. The following folklore conjecture
says that these are the only circumstances in which dimension drop
can occur. A version of it for sets was stated, probably for the first
time, by Simon \cite{Si}.
\begin{conjecture}
\label{conj:exact overlaps}Suppose that $\dim\mu<\beta$ then $\Phi$
has exact overlaps.
\end{conjecture}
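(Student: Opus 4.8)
The plan is to establish the sharper identity $\dim\mu=\min\{1,h/\chi\}$, where
\[
h=h(\Phi,p):=\lim_{n\to\infty}\tfrac1n\,H\!\left(\text{distribution of }\varphi_{\omega_{1}\cdots\omega_{n}}\right)
\]
is the random-walk entropy of the semigroup generated by $\Phi$, the $\omega_{i}$ being i.i.d.\ with law $p$ and the limit existing by subadditivity. One has $h\le H(p)$ always, with equality precisely when that semigroup is free, i.e.\ precisely when $\Phi$ has no exact overlaps; so in the absence of exact overlaps the identity gives $\dim\mu=\min\{1,H(p)/\chi\}=\beta$, which is the contrapositive of the conjecture. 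The bound $\dim\mu\le\min\{1,h/\chi\}$ holds for every self-similar measure and is elementary: for $p^{\mathbb{N}}$-typical $\omega$ the level-$n$ cylinder $\varphi_{\omega_{1}\cdots\omega_{n}}K$ carries mass $\approx2^{-hn}$ and has diameter $\asymp2^{-\chi n}$, so $\mu$ is essentially covered by $\approx2^{hn}$ sets of size $2^{-\chi n}$. The whole content is the lower bound $\dim\mu\ge\min\{1,h/\chi\}$, and this is where algebraicity of the $\lambda_{j}$ is used.

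Suppose for contradiction that $\dim\mu<\min\{1,h/\chi\}$. The engine is Hochman's inverse theorem for the entropy of convolutions, which converts this dimension drop into concentration of the random walk $\nu_{n}$ (the law of $\varphi_{\omega_{1}\cdots\omega_{n}}$, viewed as a measure on the similarity group $\mathrm{Aff}(\mathbb{R})\cong\mathbb{R}^{*}\times\mathbb{R}$): there is $\varepsilon>0$ such that, for suitable scales, $\nu_{n}$ clusters a definite proportion of its mass into far fewer dyadic cells than its entropy growth rate $h$ would permit. Into this I feed the algebraic structure. The multiplicative part is inert: $\lambda_{w}=\prod_{j}\lambda_{j}^{\#\{i:\,w_{i}=j\}}$ depends only on the letter-frequency vector of $w$, so $\{\lambda_{w}:w\in\Lambda^{n}\}$ has only $O(n^{m})$ elements and contributes $o(n)$ to every entropy in sight — the concentration is genuinely a concentration of the translation parts $c_{w}=\varphi_{w}(0)=\sum_{k\le n}\lambda_{w_{1}}\cdots\lambda_{w_{k-1}}t_{w_{k}}$.

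Each $c_{w}$ lies in the fixed module $M=\sum_{j\in\Lambda}\mathbb{Z}[\lambda_{0},\dots,\lambda_{m}]\,t_{j}\subseteq\mathbb{R}$, and since the $\lambda_{j}$ are algebraic, $\mathbb{K}:=\mathbb{Q}(\lambda_{0},\dots,\lambda_{m})$ is a number field and $M\otimes_{\mathbb{Z}}\mathbb{Q}$ a finite-dimensional $\mathbb{Q}$-vector space $V$. Fixing a $\mathbb{Q}$-basis of $V$, record each $\varphi_{w}$ by the pair $\rho_{w}=(\lambda_{w},\ \text{coordinate vector of }c_{w})\in\mathbb{K}^{*}\times V$; then $\rho_{w}=\rho_{v}$ if and only if $\varphi_{w}=\varphi_{v}$, the entries of a nonzero $\rho_{w}-\rho_{v}$ are $S$-integers of $\mathbb{K}$ (for a fixed finite $S$) of height $\le C^{n}$, and — by a height lower bound at the archimedean place — distinct $\rho_{w},\rho_{v}$ with $w,v\in\Lambda^{n}$ are separated by at least $cC^{-n}$. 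The strategy is to transport the concentration of $\nu_{n}$ to $\tilde\nu_{n}=(\rho_{w})_{*}p^{\otimes n}$, where no super-exponential clustering is possible: the only way $\tilde\nu_{n}$ can lose $\varepsilon n$ of entropy at a scale finer than $cC^{-n}$ is through large-mass atoms, i.e.\ through many distinct level-$n$ words defining the same similarity — an exact overlap, contradicting freeness.

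The crux — and where essentially all the work lies — is the legitimacy of that transport, because the correspondence $\varphi_{w}\mapsto\rho_{w}$ is \emph{not} Lipschitz with any control: the real number $c_{w}-c_{v}$ can be extremely small while its coordinate vector in $V$ is not, precisely when the translations $t_{j}$ obey Liouville-type $\mathbb{Q}$-linear near-relations. Thus a single super-exponentially close pair $\varphi_{w}\approx\varphi_{v}$, which Hochman's theorem on its own produces (the $\Delta_{n}\to0$ corollary), is not enough — it may be a transcendental accident carrying no algebraic weight. One must extract from the dimension drop a far more robust phenomenon: concentration of a positive proportion of the mass of $\nu_{n}$ that is coherent across a positive density of scales in the structured way the inverse theorem provides, and then prove an arithmetic rigidity statement to the effect that such scale-coherent structured concentration of the $c_{w}$ is impossible unless genuinely many of them coincide. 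Controlling the interplay between the algebraic scale $\log_{2}C$, the Lyapunov scale $\chi$, the working resolution, and the position of $V$ inside $\mathbb{R}$ — and pushing Hochman's machinery to deliver concentration at a usable (fine, positive-mass, coherent) scale rather than merely a close pair — is the technical heart. The remaining ingredients — the reduction above, the module and height estimates, and the bookkeeping with frequency vectors — are comparatively routine.
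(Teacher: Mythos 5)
Your proposal is not a proof: it reduces the whole theorem to an ``arithmetic rigidity statement'' that you explicitly leave unproven, and that statement is exactly where the difficulty lives. The elementary parts you do argue are fine (the upper bound $\dim\mu\le\min\{1,h/\chi\}$, the equivalence $h=H(p)\Leftrightarrow$ no exact overlaps, and the observation that the scaling parts contribute only $O(\log n^{m+1})$ entropy so the concentration produced by Hochman's theorem sits in the translation parts), but the transport from metric concentration of the real numbers $c_{w}$ to coincidences of your coordinate vectors $\rho_{w}$ is not a technicality one can defer: the translations $t_{j}$ are arbitrary reals, so the coordinate map has no metric control whatsoever, and the Baker and B\'ar\'any--K\"aenm\"aki examples show that super-exponential concentration of the $c_{w}$ (even with rational $\lambda$) can occur with \emph{no} exact overlaps. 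Hence any rigidity statement of the kind you invoke must use the dimension drop in a structural way that your sketch never specifies, and it is not clear such a statement, phrased as concentration of $\nu_{n}$ alone, is even true. In short, you have identified the crux but supplied no mechanism for it.

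For comparison, the paper fills precisely this hole by a different device, with no heights of the $t_{j}$ and no $\mathbb{Q}$-coordinatization of the translations. Each near-coincidence ($\lambda_{w_{1}}=\lambda_{w_{2}}$, $|\varphi_{t,w_{1}}(0)-\varphi_{t,w_{2}}(0)|\le\delta^{n}$) is viewed as a linear functional $L_{w_{1},w_{2}}(t)=\sum_{j}P^{j}(\lambda)t_{j}$ with $P^{j}\in\mathcal{P}(1,n)$, so algebraicity of the $\lambda_{j}$ enters only through a Liouville-type lower bound for nonzero values $P(\lambda)$ (Lemma \ref{lem:lb on val of poly}). One then studies the rank $r_{n}$ of the family $\mathcal{A}_{n}$ of such functionals: a Gram-determinant argument gives $r_{n}\le m-d$; if $r_{n}=m-d$ for all large $n$, Cramer's rule plus the polynomial lower bound force the ratios of the relevant determinants to stabilize and to equal the coordinates of $t$ exactly, so the near-relations are exact relations and $\Phi_{t}$ has exact overlaps; if instead $r_{n}$ drops along a subsequence, one solves the linear systems exactly to produce translation vectors $s_{k}\to s_{0}$ in a \emph{higher-dimensional} ambient space $\mathbb{R}^{d'}$, shows $\dim\mu_{s_{k}}<\beta-\epsilon$ by an entropy comparison, invokes lower semicontinuity of dimension (Theorem \ref{thm:lower semi-cont of dim}) and a backward induction on the ambient dimension (this is why the paper proves the more general Theorem \ref{thm:main thm inductive} rather than the one-dimensional statement directly), and finally transfers the exact overlap of $\Phi_{s_{0}}$ back to $\Phi_{t}$ because $t$ is a linear image of $s_{0}$. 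Your proposal contains no analogue of the rank dichotomy or of the ambient-dimension induction, and without something playing their role the argument does not close.
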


A major step towards the verification of Conjecture \ref{conj:exact overlaps}
was achieved by Hochman \cite{Ho1}. For $n\ge1$ set,
\begin{equation}
\Delta_{n}=\min\left\{ \left|\varphi_{w_{1}}(0)-\varphi_{w_{2}}(0)\right|\::\:w_{1},w_{2}\in\Lambda^{n},\:w_{1}\ne w_{2}\text{ and }\lambda_{w_{1}}=\lambda_{w_{2}}\right\} \:.\label{eq:def of Delta}
\end{equation}
It always holds that $\Delta_{n}\overset{n}{\rightarrow}0$ at a rate
which is at least exponential, and that $\Delta_{n}=0$ for some $n\ge1$
if and only if $\Phi$ has exact overlaps. The main result in \cite{Ho1}
says that if $\dim\mu<\beta$ then $\Delta_{n}\overset{n}{\rightarrow}0$
super-exponentially, that is
\[
\underset{n}{\lim}\:\frac{1}{n}\log\Delta_{n}=-\infty\:.
\]
A version of this for $L^{q}$ dimensions was recently obtained by
Shmerkin \cite[Theorem 6.6]{Sh}.

Two applications of Hochman's result are especially relevant to the
present paper. It is not hard to see that if $\lambda_{0},...,\lambda_{m},t_{0},...,t_{m}$
are all algebraic numbers and $\Delta_{n}\overset{n}{\rightarrow}0$
super-exponentially, then in fact $\Phi$ must have exact overlaps.
Relaying on this observation, Conjecture \ref{conj:exact overlaps}
is established in \cite[Theorem 1.5]{Ho1} for the case of algebraic
parameters. The second application verifies a conjecture of Furstenberg
regarding projections of the one-dimensional Sierpinski gasket (see
e.g. \cite[Question 2.5]{PS}). Stated with the notation introduced
above, it is proven in \cite[Theorem 1.6]{Ho1} that Conjecture \ref{conj:exact overlaps}
is valid when $m=2$ and
\[
\lambda=p=(\frac{1}{3},\frac{1}{3},\frac{1}{3})\:.
\]

Another important step towards Conjecture \ref{conj:exact overlaps}
was recently achieved by Varj{\'u} \cite{Var}. He has shown that
if $\mu$ is a Bernoulli convolution, that is if in the notation above
\[
m=1,\:\lambda_{0}=\lambda_{1}>0,\:t_{0}=-1\text{ and }t_{1}=1,
\]
then $\dim\mu=\beta$ whenever $\lambda_{0}$ is transcendental. Together
with the result mentioned above regarding systems with algebraic parameters,
this verifies Conjecture \ref{conj:exact overlaps} for the family
of Bernoulli convolutions.

Given Hochman's and Shmerkin's results, it is natural to ask whether
$\Phi$ has exact overlaps whenever $\Delta_{n}\overset{n}{\rightarrow}0$
super-exponentially. Recently, examples have been constructed by Baker
\cite{Ba} and independently by Bárány and Käenmäki \cite{BK}, which
show that this is not necessarily true. In Baker's construction the
maps in the IFS all contract by a rational number, and so it is especially
relevant to the present paper. In a joint work with P. Varj{\'u}
\cite{RV} we will treat a family of self-similar measures which is
closer to the example from \cite{BK}.

\subsection{Results}

The following theorem is our main result. It verifies Conjecture \ref{conj:exact overlaps}
for the case of algebraic contractions and arbitrary translations.
\begin{thm}
\label{thm:main thm}Let $m\ge0$ and $\Phi=\{\varphi_{j}(x)=\lambda_{j}x+t_{j}\}_{j=0}^{m}$
be a self-similar IFS on $\mathbb{R}$. Suppose that $\lambda_{0},...,\lambda_{m}$
are all algebraic numbers and that $\Phi$ has no exact overlaps.
Let $p=(p_{j})_{j=0}^{m}$ be a probability vector and denote by $\mu$
the self-similar measure corresponding to $\Phi$ and $p$. Then $\dim\mu=\beta$,
where $\beta$ is as defined in (\ref{eq:def of beta}).
\end{thm}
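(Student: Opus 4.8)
The plan is to combine Hochman's theorem with algebraic structure, but we must deal with the fact that the translations $t_0,\dots,t_m$ need not be algebraic, so the simple argument used in \cite[Theorem 1.5]{Ho1} (where $\Delta_n$ cannot go to zero super-exponentially unless it vanishes) is unavailable. By Hochman's theorem, it suffices to prove the contrapositive: if $\Phi$ has no exact overlaps, then $\Delta_n$ does \emph{not} tend to $0$ super-exponentially, i.e.\ $\liminf_n \frac1n\log\Delta_n > -\infty$. The key observation is that $\Delta_n$ compares $\varphi_{w_1}(0)$ and $\varphi_{w_2}(0)$ only for words with $\lambda_{w_1}=\lambda_{w_2}$; writing $c = \lambda_{w_1}=\lambda_{w_2}$, the difference $\varphi_{w_1}(0)-\varphi_{w_2}(0)$ is a $\mathbb{Z}$-linear combination, with coefficients built from products of the $\lambda_j$'s, of the numbers $t_0,\dots,t_m$. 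So the obstruction is really about how small a nonzero such linear form can be, and this is where algebraicity of the $\lambda_j$ enters.

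First I would reduce to a finitely generated setting: let $\mathbb{K}$ be the number field generated by $\lambda_0,\dots,\lambda_m$, and let $R$ be the ring generated over $\mathbb{Z}$ by the $\lambda_j$ and their inverses (a localization of the ring of integers of $\mathbb{K}$, hence finitely generated). For fixed $c$ in the (multiplicative) set of common values of $\lambda_{w_1}$, the quantities $\varphi_{w_1}(0)-\varphi_{w_2}(0)$ lie in the $R$-module $M$ spanned by $c\cdot t_0,\dots,c\cdot t_m$ inside $\mathbb{R}$ — more precisely in $c$ times a fixed finitely generated $\mathbb{Z}[\lambda_j,\lambda_j^{-1}]$-module $N := \sum_j \mathbb{Z}[\lambda_j,\lambda_j^{-1}]\, t_j$. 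The second step is the heart of the matter: show there is a constant $\rho>0$ such that every nonzero element of $N$ has absolute value at least $\rho$ times a controlled power of $\max_j|\lambda_j|^{-n}$ when it arises from words of length $n$. This is a \emph{no-small-values} statement for a finitely generated subgroup of $\mathbb{R}$ obtained by scaling a fixed finite set of reals by units of a number field. I would prove it by embedding $\mathbb{K}$ into $\prod_v \mathbb{K}_v$ over all places $v$ (archimedean and the finitely many non-archimedean places dividing the $\lambda_j$), and using a product formula / geometry-of-numbers argument: a nonzero element $\xi\in N$ of the form $\xi = \sum_j a_j t_j$ with $a_j\in\mathbb{Z}[\lambda_j,\lambda_j^{-1}]$ of bounded ``degree'' (controlled by $n$) has all its non-archimedean absolute values bounded below by a quantity like $C^{-n}$, and there are only boundedly many archimedean places, so if $|\xi|$ were super-exponentially small the product of all absolute values of the relevant algebraic data would violate a Liouville-type estimate. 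The freeness hypothesis (no exact overlaps) guarantees precisely that the relevant $\xi$ is nonzero, so the lower bound applies and yields $\Delta_n \ge C^{-n}$ for some $C$, contradicting super-exponential decay.

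The main obstacle, and the part requiring genuine care, is the second step: the coefficients $a_j$ are not arbitrary algebraic numbers but Laurent polynomials in the $\lambda_j$ of degree growing linearly in $n$, and one must track how the heights of these coefficients — hence the relevant archimedean and non-archimedean valuations — grow, to be sure the Liouville lower bound beats the (at worst exponential) upper bound $\Delta_n \le (\max_j|\lambda_j|)^{n}\cdot\text{const}$. Concretely one needs: (i) a uniform choice of denominators clearing the $\lambda_j$ to algebraic integers; (ii) control of the $v$-adic sizes of $\lambda_{w}$ for $|w|=n$, which is exponential in $n$ with an explicit base at each place; and (iii) a clean statement that a nonzero $\mathbb{Z}$-linear form in $t_0,\dots,t_m$ with integer-ish algebraic coefficients of height $\le H$ cannot be smaller than $H^{-O(1)}$ in absolute value — but here the $t_j$ are transcendental, so this last point is false in that naive form and must instead be phrased as: the map $(a_0,\dots,a_m)\mapsto \sum a_j t_j$ from the relevant finite-rank lattice to $\mathbb{R}$ has the property that nonzero lattice points of norm $\le H$ map to reals of absolute value $\ge c/H^{D}$ — this is automatic for a \emph{fixed} lattice embedded in $\mathbb{R}$ (discreteness), and the real content is that all the lattices arising (as $n$ and $c$ vary) sit inside one fixed finitely generated $\mathbb{Z}$-module, on which the ambient archimedean norm is proper. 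I would therefore organize the argument around showing that $\bigcup_n \{\lambda_w^{-1}(\varphi_{w_1}(0)-\varphi_{w_2}(0))\}$ stays inside a single finitely generated abelian group $G\subset\mathbb{R}$ whose nonzero elements of bounded complexity are bounded away from $0$, with ``complexity'' measured by total degree in the $\lambda_j$, so that rescaling by $|\lambda_w|\ge (\min_j|\lambda_j|)^n$ only costs an exponential factor; freeness makes the relevant element nonzero, discreteness of $G$ (suitably filtered) makes it large, and the two together defeat super-exponential decay.
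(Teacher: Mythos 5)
Your reduction to the claim ``no exact overlaps $\Rightarrow$ $\liminf_n \frac{1}{n}\log\Delta_n > -\infty$'' is the fatal step: that claim is \emph{false} under the hypotheses of the theorem, and the paper itself cites the counterexamples. Baker, and independently B\'ar\'any--K\"aenm\"aki, construct homogeneous IFSs with a \emph{rational} contraction ratio (so certainly algebraic), no exact overlaps, and $\Delta_n\le\epsilon_n$ for an arbitrarily prescribed sequence $\epsilon_n$ --- in particular with super-exponential decay. So no lower bound of the form $\Delta_n\ge C^{-n}$ can follow from algebraicity of the $\lambda_j$ alone when the translations are arbitrary. The precise point where your argument breaks is the ``no-small-values'' property you attribute to the group $G=\sum_j\mathbb{Z}[\lambda_j^{\pm1}]\,t_j$: a finitely generated subgroup of $\mathbb{R}$ of rank at least $2$ is dense, not discrete, and for transcendental (e.g.\ Liouville-type) translations a nonzero combination $\sum_j a_j t_j$ with coefficients of height at most $H$ can be far smaller than any $H^{-O(1)}$; whether such forms admit a polynomial (or exponential-in-$n$) lower bound is exactly a Diophantine condition on the $t_j$ that the theorem does not assume. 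Product-formula/Liouville estimates control only the algebraic data $a_j=P_j(\lambda)$, not the real numbers $\sum_j a_j t_j$.

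The paper's proof takes a genuinely different route precisely because of this obstruction. It proves a stronger inductive statement (Theorem \ref{thm:main thm inductive}) about self-similar measures in $\mathbb{R}^d$ with the same algebraic contractions and spanning translations, argued by backward induction on $d$. Hochman's inverse theorem in $\mathbb{R}^d$ produces, when $\dim\mu_t<\beta$, many near-coincidences $L(t^l)\le\delta^n$ for linear functionals $L\in\mathcal{L}_n$ whose coefficients are values $P(\lambda)$ of bounded-height integer polynomials; the Liouville-type bound (Lemma \ref{lem:lb on val of poly}) is applied \emph{only} to such polynomial expressions in $\lambda$ (Gram determinants, minors), never to linear forms in the $t_j$. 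Depending on the rank of the resulting coefficient matrix, either Cramer's rule forces the functionals to vanish exactly (giving exact overlaps), or one passes to a limit system in a higher-dimensional space, invokes lower semicontinuity of $t\mapsto\dim\mu_t$ and the induction hypothesis there, and pulls the exact overlap back to $\Phi_t$. In short: the conclusion is reached without ever bounding $\Delta_n$ from below, and any proof attempt that tries to establish such a bound for arbitrary translations cannot succeed.
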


A version for sets of the conjecture follows directly from the last
theorem in the case of algebraic contractions. Given an IFS $\Phi$
as above denote by $\dim_{s}\Phi$ its similarity dimension, that
is $\dim_{s}\Phi$ is the unique $s\ge0$ which satisfies the equation,
\[
\sum_{j=0}^{m}|\lambda_{j}|^{s}=1\:.
\]
It is not hard to see that $\min\{1,\dim_{s}\Phi\}$ is always an
upper bound for $\dim_{H}K$, where $K$ is the attractor of $\Phi$
and $\dim_{H}$ stands for Hausdorff dimension. Moreover, the equality
\begin{equation}
\dim_{H}K=\min\{1,\dim_{s}\Phi\}\label{eq:dim(K)=00003Dexpected}
\end{equation}
is satisfied when the union in (\ref{eq:def of K}) is disjoint or,
more generally, if $\Phi$ satisfies the so-called open set condition
(see for instance \cite[Chapter 2.1]{BP}). The version for sets of
Conjecture \ref{conj:exact overlaps} says that (\ref{eq:dim(K)=00003Dexpected})
holds whenever $\Phi$ has no exact overlaps.
\begin{cor}
\label{cor:for sets}Let $m\ge0$ and $\Phi=\{\varphi_{j}(x)=\lambda_{j}x+t_{j}\}_{j=0}^{m}$
be a self-similar IFS on $\mathbb{R}$. Suppose that $\lambda_{0},...,\lambda_{m}$
are all algebraic numbers and that $\Phi$ has no exact overlaps.
Let $K$ be the attractor of $\Phi$, then (\ref{eq:dim(K)=00003Dexpected})
is satisfied.
\end{cor}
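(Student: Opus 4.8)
The plan is to derive the corollary directly from Theorem \ref{thm:main thm}, applied to the probability vector canonically associated with the similarity dimension of $\Phi$. Write $s=\dim_{s}\Phi$; by definition $\sum_{j=0}^{m}|\lambda_{j}|^{s}=1$, so setting $p_{j}=|\lambda_{j}|^{s}$ for $0\le j\le m$ produces a probability vector $p=(p_{j})_{j=0}^{m}$, and all of its coordinates are strictly positive since $\lambda_{j}\ne0$ for every $j$. The first step is the elementary identity, obtained from (\ref{eq:def of ent and lyap exp}),
\[
H(p)=-\sum_{j=0}^{m}|\lambda_{j}|^{s}\log|\lambda_{j}|^{s}=-s\sum_{j=0}^{m}|\lambda_{j}|^{s}\log|\lambda_{j}|=s\chi,
\]
in which $\chi>0$ because $|\lambda_{j}|\in(0,1)$ for all $j$. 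Hence $H(p)/\chi=s$, and (\ref{eq:def of beta}) gives
\[
\beta=\beta(\Phi,p)=\min\{1,H(p)/\chi\}=\min\{1,s\}=\min\{1,\dim_{s}\Phi\}.
\]

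Next I would bring in the self-similar measure $\mu=\mu(\Phi,p)$ corresponding to $\Phi$ and this $p$. Since $\lambda_{0},\dots,\lambda_{m}$ are algebraic and $\Phi$ has no exact overlaps, Theorem \ref{thm:main thm} applies and yields $\dim\mu=\beta=\min\{1,\dim_{s}\Phi\}$. The measure $\mu$ is supported on $K$, and by \cite{FLR} its exact dimension $\dim\mu$ agrees with the Hausdorff dimension of the measure; via the mass distribution principle this gives $\dim_{H}K\ge\dim\mu=\min\{1,\dim_{s}\Phi\}$. The reverse inequality $\dim_{H}K\le\min\{1,\dim_{s}\Phi\}$ is the standard upper bound for the attractor recalled just before the statement of the corollary, so combining the two inequalities establishes (\ref{eq:dim(K)=00003Dexpected}).

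Granting Theorem \ref{thm:main thm}, I do not anticipate any genuine difficulty: the whole argument consists of choosing the right $p$ and quoting the classical lower bound relating the Hausdorff dimension of a measure to that of its support. The single point that deserves a moment of attention is that the chosen $p$ has strictly positive coordinates --- so that $\mu$, $H(p)$ and $\chi$ are well defined and the hypotheses of Theorem \ref{thm:main thm} are indeed satisfied --- which is precisely where the condition $\lambda_{j}\ne0$ is used.
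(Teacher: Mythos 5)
Your proposal is correct and follows exactly the paper's argument: take $p_{j}=|\lambda_{j}|^{s}$ with $s=\dim_{s}\Phi$, note $\beta=\min\{1,s\}$, apply Theorem \ref{thm:main thm}, and use that $\mu$ is supported on $K$ to get $\dim_{H}K\ge\dim\mu$, combined with the standard upper bound. The only addition is your explicit computation $H(p)=s\chi$ and the remark on positivity of the coordinates, which the paper leaves implicit.
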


\begin{proof}
Write $s=\dim_{s}\Phi$ and denote by $p$ the probability vector
$(|\lambda_{j}|^{s})_{j=0}^{m}$. Let $\beta$ be as defined in (\ref{eq:def of beta}),
then $\beta=\min\{1,s\}$. Let $\mu$ be the self-similar measure
corresponding to $\Phi$ and $p$. By Theorem \ref{thm:main thm}
we have $\dim\mu=\min\{1,s\}$. Additionally, since $\mu$ is supported
on $K$ it follows that $\dim_{H}K\ge\dim\mu$. This completes the
proof of the corollary.
\end{proof}
For our next application suppose that $\beta<1$. As mentioned in
Section \ref{subsec:Background}, if $\Phi$ has exact overlaps then
necessarily $\dim\mu<\beta$. If instead it only holds that $\Delta_{n}\overset{n}{\rightarrow}0$
super-exponentially, then in general it is not clear whether there
is dimension drop or not. In fact, until now there was no single example
in which $\dim\mu$ was analysed in such a situation. Combining our
result with Baker's construction we are able to obtain such an example.
We say that an IFS is homogeneous if all of its maps have the same
contraction part.
\begin{cor}
Let $(\epsilon_{n})_{n\ge1}$ be an arbitrary sequence of positive
real numbers. Then there exists a homogeneous self-similar IFS $\Phi=\{\varphi_{j}(x)=\lambda x+t_{j}\}_{j=0}^{m}$
on $\mathbb{R}$ such that,
\begin{enumerate}
\item $\lambda$ is a rational number;
\item $\Phi$ has no exact overlaps;
\item $\Delta_{n}\le\epsilon_{n}$ for all $n\ge1$, where $\Delta_{n}$
is as defined in (\ref{eq:def of Delta});
\item $\dim_{s}\Phi<1$, and in particular $\beta(\Phi,p)<1$ for every
probability vector $p=(p_{j})_{j=0}^{m}$;
\item $\dim\mu(\Phi,p)=\beta(\Phi,p)$ for every probability vector $p=(p_{j})_{j=0}^{m}$,
where recall that $\mu(\Phi,p)$ is the self-similar measure corresponding
to $\Phi$ and $p$;
\item $\dim_{H}K=\dim_{s}\Phi$, where $K$ is the attractor of $\Phi$.
\end{enumerate}
\end{cor}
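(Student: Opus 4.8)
The plan is to combine Theorem~\ref{thm:main thm} with the construction of Baker~\cite{Ba}. Baker produces, for any prescribed sequence of error tolerances, a homogeneous self-similar IFS with a rational contraction ratio, no exact overlaps, and $\Delta_{n}$ decaying super-exponentially fast; more precisely, his construction can be arranged so that $\Delta_{n}\le\epsilon_{n}$ for every $n\ge1$ and so that the number of maps $m+1$ together with the value $|\lambda|$ satisfy $(m+1)|\lambda|<1$, which gives $\dim_{s}\Phi<1$. I would begin by quoting this construction to obtain items (1), (2), (3) and (4) directly; item (4) follows since for a homogeneous IFS one has $\dim_{s}\Phi=\frac{\log(m+1)}{-\log|\lambda|}$, and $\beta(\Phi,p)=\min\{1,H(p)/\chi\}\le\dim_{s}\Phi<1$ for every probability vector $p$ because $H(p)\le\log(m+1)$ while $\chi=-\log|\lambda|$ is independent of $p$.

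With the IFS in hand, items (5) and (6) are immediate consequences of the results already established in this paper. For (5): the contraction ratios are rational, hence algebraic, and $\Phi$ has no exact overlaps, so Theorem~\ref{thm:main thm} applies to every probability vector $p$ and yields $\dim\mu(\Phi,p)=\beta(\Phi,p)$. For (6): Corollary~\ref{cor:for sets} applies for exactly the same reasons (algebraic contractions, no exact overlaps), giving $\dim_{H}K=\min\{1,\dim_{s}\Phi\}=\dim_{s}\Phi$, where the last equality uses $\dim_{s}\Phi<1$ from item (4). So the proof is essentially an assembly: invoke Baker for (1)--(4), then invoke Theorem~\ref{thm:main thm} for (5) and Corollary~\ref{cor:for sets} for (6).

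The only genuine work is checking that Baker's construction can be made to satisfy all of (1)--(4) simultaneously, in particular that the requirement $\dim_{s}\Phi<1$ is compatible with driving $\Delta_{n}$ below an arbitrary sequence $(\epsilon_{n})$. I would verify this by inspecting the parameters in~\cite{Ba}: his examples are built from a base homogeneous IFS with sufficiently many maps and small enough rational contraction $\lambda$ (so that $(m+1)|\lambda|<1$ holds outright), and the translations $t_{j}$ are then perturbed within the rationals to force the near-coincidences $\varphi_{w_{1}}(0)\approx\varphi_{w_{2}}(0)$ to the prescribed accuracy while keeping the maps distinct. Since the perturbations do not change $\lambda$ or the number of maps, $\dim_{s}\Phi$ is unaffected, and there is no tension between (3) and (4). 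This compatibility check is the main obstacle in the sense that it is the only place where one must look inside a cited construction rather than apply a black-box theorem; everything else is a direct citation.
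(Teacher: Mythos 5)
Your proposal is correct and matches the paper's proof exactly: the paper also obtains properties (1)--(4) by citing Baker's construction (\cite[Theorem 1.3 and Remark 2.2]{Ba}) and then deduces (5) and (6) from Theorem \ref{thm:main thm} and Corollary \ref{cor:for sets}. The compatibility of (3) and (4) that you flag as the only real work is precisely what the cited statement of Baker provides, so no further argument is needed.
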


\begin{proof}
The existence of a homogeneous IFS $\Phi$ which satisfies the first
four properties follows from \cite[Theorem 1.3 and Remark 2.2]{Ba}.
The last two properties follow from Theorem \ref{thm:main thm} and
Corollary \ref{cor:for sets}.
\end{proof}

\subsection{About the proof}

We derive Theorem \ref{thm:main thm} from the following more general
statement, which concerns also some self-similar measures in higher
dimensions. This enables us to prove it by backward induction on the
dimension of the ambient space.
\begin{thm}
\label{thm:main thm inductive}Let $m\ge2$ be an integer, $\lambda=(\lambda_{j})_{j=0}^{m}$
be algebraic numbers in $(-1,1)\setminus\{0\}$ and $p=(p_{j})_{j=0}^{m}$
be a probability vector with strictly positive coordinates. Then for
every $1\le d<m$ the following holds. Let $(t_{j})_{j=1}^{m}=t\in(\mathbb{R}^{d})^{m}$
be such that,
\begin{equation}
\mathrm{span}\{t_{1},...,t_{m}\}=\mathbb{R}^{d},\label{eq:assumptions on t}
\end{equation}
and let $\Phi_{t}=\{\varphi_{t,j}\}_{j=0}^{m}$ be the self-similar
IFS on $\mathbb{R}^{d}$ with,
\[
\varphi_{t,0}(x)=\lambda_{0}x\text{ and }\varphi_{t,j}(x)=\lambda_{j}x+t_{j}\text{ for each }x\in\mathbb{R}^{d}\text{ and }1\le j\le m\:.
\]
Denote by $\mu_{t}$ the self-similar measure on $\mathbb{R}^{d}$
which corresponds to $\Phi_{t}$ and $p$. Suppose that $\Phi_{t}$
has no exact overlaps, then
\begin{equation}
\dim\mu_{t}\ge\min\{1,H(p)/\chi\},\label{eq:conclusion of thm}
\end{equation}
where $H(p)$ and $\chi$ are as defined in (\ref{eq:def of ent and lyap exp}).
\end{thm}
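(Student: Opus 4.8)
The plan is to run the Hochman entropy-increase machinery on $\mathbb{R}^d$, using the algebraicity of the contractions to rule out the super-exponential concentration of cylinder measures, and then to reduce the resulting rigid case to a lower-dimensional IFS in order to close a backward induction on $d$. I first recall that by the higher-dimensional version of Hochman's theorem (in the form available for homogeneous-contraction self-similar measures, or via the parametrization-independent entropy argument), either $\dim\mu_t \ge \min\{1, H(p)/\chi\}$ holds, or else the family of "$n$-th level" partition measures $\{\varphi_{t,w}(0) : w \in \Lambda^n,\ \lambda_w = \lambda_{w'} \text{ for matching lengths}\}$ satisfies $\Delta_n^{(d)} \to 0$ super-exponentially, where $\Delta_n^{(d)}$ is the appropriate $d$-dimensional analogue of $\Delta_n$ from (\ref{eq:def of Delta}) measuring the closest distance between images of $0$ under distinct words with equal contraction ratio. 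So I would first state and invoke this dichotomy, isolating the bad case for the remainder of the argument.

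In the bad case, I would exploit that all $\lambda_j$ are algebraic. If $\lambda_{w_1} = \lambda_{w_2}$ then $\varphi_{t,w_1}(0) - \varphi_{t,w_2}(0)$ is an explicit polynomial expression in the $\lambda_j$'s and the coordinates of the $t_j$'s, with bounded-height integer-combination structure once one clears denominators in the $\lambda_j$; a Garsia-type separation estimate (lower bound on the Mahler measure / on nonzero algebraic numbers of bounded degree and height) then forces: either the expression is exactly zero for some pair $w_1 \ne w_2$, or it is bounded below by $c^{-n}$ for a constant $c$ depending only on $\lambda$. The latter contradicts super-exponential decay, so we are forced into the former — meaning $\varphi_{t,w_1}(0) = \varphi_{t,w_2}(0)$ for some distinct equal-ratio words. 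This is \emph{not yet} an exact overlap (the linear parts agree but we only know the images of $0$ coincide), but combined with $\lambda_{w_1} = \lambda_{w_2}$ it \emph{is} an exact overlap $\varphi_{t,w_1} = \varphi_{t,w_2}$ — contradicting the hypothesis. Care is needed here: the subtlety is that the algebraic relation genuinely involves the translations $t_j$, which are \emph{not} assumed algebraic, so the Garsia argument cannot be applied to the full expression directly. The honest route, and the one I expect to be the technical heart, is to instead extract from super-exponential concentration a nontrivial \emph{algebraic} relation among the $\lambda_j$ alone — i.e. to show that the $t$-dependence can be quotiented out by passing to a suitable $d'$-dimensional sub-IFS with $d' < d$, which is exactly where the induction enters.

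\textbf{Main obstacle and the inductive step.} The hard part is precisely handling the translations. The way I would organize it: if (\ref{eq:conclusion of thm}) fails, then by the above $\mu_t$ is "not generic", and one shows that $\mu_t$ must in fact live on (a translate of) a proper affine subspace $V \subsetneq \mathbb{R}^d$, or that after a linear change of coordinates the IFS $\Phi_t$ is conjugate to one of the form covered by Theorem \ref{thm:main thm inductive} with a strictly smaller $d$ — using assumption (\ref{eq:assumptions on t}) $\mathrm{span}\{t_1,\dots,t_m\} = \mathbb{R}^d$ to control the geometry. Concretely, the concentration of $\{\varphi_{t,w}(0)\}$ in some proper subspace direction, together with $1 \le d < m$, lets us project $\mu_t$ onto a line (or lower-dimensional subspace) where either the entropy/Lyapunov budget is already enough, or the projected system is again an algebraic-contraction IFS without exact overlaps in dimension $d-1$, to which the inductive hypothesis applies and yields $\dim\mu_t \ge \min\{1, H(p)/\chi\}$ for the projection, hence for $\mu_t$ itself. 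The base case $d = 1$ is where Hochman's original one-dimensional theorem plus the algebraic-separation observation (the very observation quoted in the excerpt, that algebraic parameters plus super-exponential $\Delta_n \to 0$ forces exact overlaps) closes the argument directly. I would also need to verify that the no-exact-overlaps hypothesis is inherited by the relevant sub-IFS, or else that its failure already produces an exact overlap for $\Phi_t$; this bookkeeping, rather than any single deep estimate, is what makes the proof long.

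\begin{proof}[Proof plan]
See the strategy above; the formal argument proceeds by backward induction on $d$, with the base case $d=1$ handled by \cite{Ho1} together with the algebraicity of $\lambda$, and the inductive step combining the $d$-dimensional entropy-increase dichotomy with a reduction of the concentrated case to dimension $d-1$ via projection, using (\ref{eq:assumptions on t}).
\end{proof}
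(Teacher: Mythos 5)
Your plan has two genuine gaps, and they are not bookkeeping issues but the core mechanism. First, your base case is wrong. You propose to close $d=1$ by "Hochman's one-dimensional theorem plus the algebraic-separation observation", but that observation (that super-exponential decay of $\Delta_n$ forces exact overlaps) requires \emph{all} parameters, including the translations, to be algebraic; in Theorem \ref{thm:main thm inductive} the $t_j$ are arbitrary reals, and this is exactly the difficulty the theorem is meant to overcome. Indeed Baker's construction \cite{Ba}, cited in the introduction, is a one-dimensional IFS with a rational contraction ratio, no exact overlaps, and $\Delta_n\rightarrow0$ super-exponentially, so the step you rely on for the base case is false as stated. Relatedly, your "bad case" description (that $\mu_t$ concentrates on a proper affine subspace) cannot occur: the spanning hypothesis (\ref{eq:assumptions on t}) makes $\Phi_t$ affinely irreducible, and what the higher-dimensional result of \cite{Ho2} actually yields in the non-generic case is an entropy statement about $\nu_t^{(n)}$ at scale $\delta^{n}$ conditioned on the scaling part, i.e. a large family $\mathcal{A}_n$ of linear functionals $L_{w_1,w_2}$ (with coefficients $P^j(\lambda)$, $P^j\in\mathcal{P}(1,n)$) that are $\delta^n$-small on the vectors $t^1,\dots,t^d$ --- not concentration on a subspace.

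Second, your induction runs in the wrong direction, and the transfer step is missing. The paper's induction is backward in $d$: the statement for dimension $d$ is deduced from the statement for all $d<d'<m$, with the implicit base case $d=m-1$ handled by a rigidity argument (a Gram-determinant bound gives $\mathrm{rank}\,(a_{n,i}^j)\le m-d$; when the rank equals $m-d$, Cramer's rule plus the separation Lemma \ref{lem:lb on val of poly} forces the ratios of determinants to stabilize and to equal the $t_j^l$ exactly, so some $L\in\mathcal{A}_N$ vanishes on all $t^l$ and $\Phi_t$ has exact overlaps). In the deficient-rank case one does not project down to $d-1$; one \emph{lifts up}: setting $d'=m-r>d$, translation vectors $s_k\in(\mathbb{R}^{d'})^m$ are built (again via Cramer's rule) so that all functionals in $\mathcal{A}_{n_k}$ vanish on them exactly, the entropy bound plus lower semicontinuity gives $\dim\mu_{s_0}<\beta$ for the limit $s_0$, the induction hypothesis in the \emph{larger} dimension $d'$ yields exact overlaps for $\Phi_{s_0}$, and the identity $t_j=\sum_{l=1}^{d'}s_{0,j}^{l}t_l$ pushes these overlaps back to $\Phi_t$. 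Your proposed reduction to a $(d-1)$-dimensional projected system cannot work as a substitute: exact overlaps (or their absence) do not transfer from a projection back to $\Phi_t$ --- a projection may acquire overlaps that $\Phi_t$ does not have --- whereas the paper's lifted system factors \emph{onto} the original configuration, which is precisely why relations for $\Phi_{s_0}$ imply relations for $\Phi_t$. Without the rank dichotomy, the Cramer/separation rigidity argument, and this upward lift with the final transfer claim, the proposal does not close.
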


\begin{proof}[Proof of Theorem \ref{thm:main thm} given Theorem \ref{thm:main thm inductive}]
When $m=0$ the theorem is trivial. Suppose that $m=1$, then we
may assume, by conjugating the maps in $\Phi$ by an appropriate invertible
affine map, that $t_{0}=0$ and $t_{1}=1$. Thus, the theorem in this
case follows from Hochman's result mentioned above regarding systems
with algebraic parameters. When $m\ge2$ the theorem follows directly
from Theorem \ref{thm:main thm inductive} with $d=1$, and by conjugating
the maps in the IFS $\Phi$ so that $t_{0}=0$.
\end{proof}
Let us give an informal sketch for the proof of Theorem \ref{thm:main thm inductive}.
Given $d\ge1$ and $t\in(\mathbb{R}^{d})^{m}$ let $\Phi_{t}=\{\varphi_{t,j}\}_{j=0}^{m}$
and $\mu_{t}$ be as in the statement of the theorem. Recall that
$\Lambda=\{0,...,m\}$, and for $n\ge1$ and $w_{1},w_{2}\in\Lambda^{n}$
set
\[
L_{w_{1},w_{2}}(t)=\varphi_{t,w_{1}}(0)-\varphi_{t,w_{2}}(0)\text{ for }t\in\mathbb{R}^{m},
\]
where $\varphi_{t,w_{1}}$ and $\varphi_{t,w_{2}}$ are as defined
in (\ref{eq:def of compo}). It is easy to see that $L_{w_{1},w_{2}}$
is a linear functional on $\mathbb{R}^{m}$. Moreover, if $\{f_{1},...,f_{m}\}$
is the dual basis of the standard basis of $\mathbb{R}^{m}$, then
\begin{equation}
L_{w_{1},w_{2}}=\sum_{j=1}^{m}P_{j}(\lambda)f_{j}\text{ for some }P_{0},...,P_{m}\in\mathcal{P}(1,n),\label{eq:coeff of L}
\end{equation}
where $\mathcal{P}(1,n)$ is the set of all $P\in\mathbb{Z}[X_{0},...,X_{m}]$
with $\deg P<n$ and coefficients $\pm1$ or $0$. Denote by $\mathcal{L}_{n}$
the collection of all $L_{w_{1},w_{2}}$ with $w_{1},w_{2}\in\Lambda^{n}$,
$w_{1}\ne w_{2}$ and $\lambda_{w_{1}}=\lambda_{w_{2}}$.

As mentioned above the proof is carried out by backward induction
on $d$. Thus, let $1\le d<m$ and assume that the theorem holds for
all $d<d'<m$. Suppose that
\[
((t_{j}^{l})_{l=1}^{d})_{j=1}^{m}=(t_{j})_{j=1}^{m}=t\in(\mathbb{R}^{d})^{m}
\]
is such that (\ref{eq:assumptions on t}) is satisfied but (\ref{eq:conclusion of thm})
does not hold. Our objective is to show that $\Phi_{t}$ has exact
overlaps.

Let $\delta>0$ be small with respect to all previous parameters.
The starting point of the argument is a theorem of Hochman from \cite{Ho2}
regarding self-similar measures in $\mathbb{R}^{d}$. By applying
that theorem we deduce that for all sufficiently large $n\ge1$ there
exists a collection $\mathcal{A}_{n}\subset\mathcal{L}_{n}$, which
is large in a sense to be made precise during the actual proof, such
that
\begin{equation}
|L((t_{j}^{l})_{j=1}^{m})|\le\delta^{n}\text{ for each }1\le l\le d\text{ and }L\in\mathcal{A}_{n}\:.\label{eq:L is small}
\end{equation}
Denote by $r_{n}$ the dimension of the linear span of the members
of $\mathcal{A}_{n}$. By (\ref{eq:coeff of L}) and (\ref{eq:L is small}),
by assuming that $\delta$ is sufficiently small with respect to $\lambda$
and $t$, and since $\lambda_{0},...,\lambda_{m}$ are algebraic,
it is not hard to show that $r_{n}\le m-d$.

Now there are two options to consider. First assume that $r_{n}=m-d$
for all sufficiently large $n$. In this case we are able to show
that $\Phi_{t}$ has exact overlaps by extending the argument from
\cite{Ho1} used in the proof of Furstenberg's conjecture mentioned
above. Denote the annihilator of $\mathcal{A}_{n}$ by $V_{n}$, that
is
\[
V_{n}=\{x\in\mathbb{R}^{m}\::\:L(x)=0\text{ for all }L\in\mathcal{A}_{n}\}\:.
\]
By (\ref{eq:L is small}) it follows that $V_{n}$ and $V_{n+1}$
are both $\delta^{n}$-close to
\[
U=\mathrm{span}\{(t_{j}^{l})_{j=1}^{m}\::\:1\le l\le d\},
\]
which implies that they are $\delta^{n}$-close to each other. From
this and the information on the coefficients of members of $\mathcal{L}_{n}$,
it follows that in fact we must have $V_{n}=V_{n+1}$. This gives
$U=V_{n}$, from which it follows that $L((t_{j}^{l})_{j=1}^{m})=0$
for all $1\le l\le d$ and $L\in\mathcal{A}_{n}$. By the definition
of $\mathcal{L}_{n}$ this implies that $\Phi_{t}$ has exact overlaps.

In the second case we assume that there exist $1\le r<m-d$ and an
increasing sequence $\{n_{k}\}_{k\ge1}$ such that $r_{n_{k}}=r$
for all $k\ge1$. Set $d'=m-r$ and note that $d<d'<m$. For every
$k\ge1$ we choose
\[
((s_{k,j}^{l})_{l=1}^{d'})_{j=1}^{m}=(s_{k,j})_{j=1}^{m}=s_{k}\in([-1,1]^{d'})^{m},
\]
such that $\{s_{k,j}\}_{j=1}^{m}$ contains the standard basis of
$\mathbb{R}^{d'}$ and,
\begin{equation}
L((s_{k,j}^{l})_{j=1}^{m})=0\text{ for all }1\le l\le d'\text{ and }L\in\mathcal{A}_{n_{k}}\:.\label{eq:sketch zero of functionals}
\end{equation}
By moving to a subsequence we may assume that there exists $s\in(\mathbb{R}^{d'})^{m}$
such that $s_{k}\overset{k}{\rightarrow}s$. Now by (\ref{eq:sketch zero of functionals}),
since the collections $\mathcal{A}_{n_{k}}$ are sufficiently large
and by the lower semi-continuity of the dimension of self-similar
measures, it follows that $\dim\mu_{s}<\beta$. From this and the
induction hypothesis we get that $\Phi_{s}$ has exact overlaps. In
the last part of the proof we show that this, together with (\ref{eq:coeff of L})
and (\ref{eq:L is small}), implies that $\Phi_{t}$ has exact overlaps,
which completes the proof of the theorem.

It is interesting to consider Baker's construction in the context
of the proof just described. Since in this example $\Delta_{n}\overset{n}{\rightarrow}0$
super-exponentially, where $\Delta_{n}$ is defined in (\ref{eq:def of Delta}),
the collections $\mathcal{A}_{n}$ are necessarily nonempty. On the
other hand, since there are no exact overlaps, the preceding argument
shows that $\underset{n}{\liminf}\:r_{n}<m-d$ (where $d=1$) and
that the collections $\mathcal{A}_{n}$ cannot be large in a manner
resulting in a dimension drop.

\subsection*{Structure of the paper}

In the next section we make some preparations for the proof of Theorem
\ref{thm:main thm inductive}. In Section \ref{sec:Proof-of-Theorem}
we carry out the proof.

\subsection*{Acknowledgment}

I am grateful to P. Varj{\'u} for many inspiring discussions conducted
while working on the paper \cite{RV}, and for his comments on a previous
version of this paper. I would also like to thank M. Hochman and P.
Shmerkin for helpful remarks. This research was supported by the Herchel
Smith Fund at the University of Cambridge.

\section{Preparations for the proof of Theorem \ref{thm:main thm inductive}}

\subsection{\label{subsec:Some-notations}Some notations}

For the rest of this paper fix an integer $m\ge2$, a probability
vector $p=(p_{j})_{j=0}^{m}$ with strictly positive coordinates and
a vector $\lambda=(\lambda_{j})_{j=0}^{m}$ such that $\lambda_{j}$
is an algebraic number in $(-1,1)\setminus\{0\}$ for each $0\le j\le m$.
Recall that the base of the $\log$ function is always $2$ and set,
\[
H(p)=-\sum_{j=0}^{m}p_{j}\log p_{j},\:\chi=-\sum_{j=0}^{m}p_{j}\log|\lambda_{j}|\text{ and }\beta=\min\{1,H(p)/\chi\}\:.
\]

For an integer $d\ge1$ we shall often write $\mathbb{R}^{dm}$ in
place of $(\mathbb{R}^{d})^{m}$ when there is no risk of confusion.
Given $(t_{j})_{j=1}^{m}=t\in\mathbb{R}^{dm}$ let $\Phi_{t}=\{\varphi_{t,j}\}_{j=0}^{m}$
be the self-similar IFS on $\mathbb{R}^{d}$ with,
\[
\varphi_{t,0}(x)=\lambda_{0}x\text{ and }\varphi_{t,j}(x)=\lambda_{j}x+t_{j}\text{ for all }x\in\mathbb{R}^{d}\text{ and }1\le j\le m\:.
\]
Denote by $K_{t}$ the attractor of $\Phi_{t}$, that is $K_{t}$
is the unique nonempty compact subset of $\mathbb{R}^{d}$ with,
\[
K_{t}=\cup_{j=0}^{m}\varphi_{t,j}(K_{t})\:.
\]
Write $\mu_{t}$ for the self-similar measure corresponding to $\Phi_{t}$
and $p$, i.e. $\mu_{t}$ is the unique Borel probability measure
on $\mathbb{R}^{d}$ with,
\[
\mu_{t}=\sum_{j=0}^{m}p_{j}\cdot\varphi_{t,j}\mu_{t}\:.
\]
By \cite[Theorem 2.8]{FH} it follows that $\mu_{t}$ is exact dimensional.
That is there exists a number $\dim\mu_{t}\in[0,d]$ such that,
\[
\dim\mu_{t}=\underset{\delta\downarrow0}{\lim}\:\frac{\log\mu(B(x,\delta))}{\log\delta}\text{ for }\mu\text{-a.e. }x\in\mathbb{R}^{d},
\]
where $B(x,\delta)$ is the open ball in $\mathbb{R}^{d}$ with centre
$x$ and radius $\delta$.

Denote the set $\{0,...,m\}$ by $\Lambda$. Given $n\ge1$, $j_{1}...j_{n}=w\in\Lambda^{n}$
and $t\in(\mathbb{R}^{d})^{m}$ we shall write $\varphi_{t,w}$, $p_{w}$
and $\lambda_{w}$ in place of,
\[
\varphi_{t,j_{1}}\circ...\circ\varphi_{t,j_{n}},\:p_{j_{1}}\cdot...\cdot p_{j_{n}}\text{ and }\lambda_{j_{1}}\cdot...\cdot\lambda_{j_{n}}\:.
\]
The IFS $\Phi_{t}$ is said to have exact overlaps if there exist
$n\ge1$ and distinct words $w_{1},w_{2}\in\Lambda^{n}$ with $\varphi_{t,w_{1}}=\varphi_{t,w_{2}}$.

For $d\ge1$ denote by $\mathbf{G}^{d}$ the group of all affine transformations
$\psi:\mathbb{R}^{d}\rightarrow\mathbb{R}^{d}$ for which there exist
$0\ne\lambda'\in\mathbb{R}$ and $t'\in\mathbb{R}^{d}$ with $\psi(x)=\lambda'x+t'$
for all $x\in\mathbb{R}^{d}$. Given such a $\psi\in\mathbf{G}^{d}$
we sometimes write $\lambda_{\psi}$ for $\lambda'$ and $t_{\psi}$
for $t'$. For $t\in(\mathbb{R}^{d})^{m}$ and $n\ge1$ set,
\[
\nu_{t}^{(n)}=\sum_{w\in\Lambda^{n}}p_{w}\delta_{\varphi_{t,w}},
\]
where $\delta_{\varphi_{t,w}}$ is the Dirac measure at $\varphi_{t,w}$.
Thus $\nu_{t}^{(n)}$ is a finitely supported probability measure
on $\mathbf{G}^{d}$.

Suppose that $Y_{t,1},Y_{t,2},...$ are i.i.d. $\mathbf{G}^{d}$-valued
random variables with,
\[
\mathbb{P}\{Y_{t,1}=\varphi_{t,j}\}=p_{j}\text{ for each }j\in\Lambda\:.
\]
Then for each $n\ge1$ the distribution of $Y_{t,1}\cdot...\cdot Y_{t,n}$
is equal to $\nu_{t}^{(n)}$. From this it follows that,
\begin{equation}
\nu_{t}^{(n+k)}=\nu_{t}^{(n)}*\nu_{t}^{(k)}\text{ for all }n,k\ge1,\label{eq:=00003Dto convolution}
\end{equation}
where $\nu_{t}^{(n)}*\nu_{t}^{(k)}$ is the convolution of $\nu_{t}^{(n)}$
with $\nu_{t}^{(k)}$.

\subsection{Partitions, entropy and dimension}

Given $d\ge1$ and $n\ge0$ write $\mathcal{D}_{n}^{d}$ for the level-$n$
dyadic partition of $\mathbb{R}^{d}$, that is
\[
\mathcal{D}_{n}^{d}=\{[\frac{k_{1}}{2^{n}},\frac{k_{1}+1}{2^{n}})\times...\times[\frac{k_{d}}{2^{n}},\frac{k_{d}+1}{2^{n}})\::\:k_{1},...,k_{d}\in\mathbb{Z}\}\:.
\]
Denote by $\mathcal{E}_{n}^{d}$ the level-$n$ dyadic partition of
$\mathbf{G}^{d}$ according to the translation part, i.e.
\[
\mathcal{E}_{n}^{d}=\left\{ \{\psi\in\mathbf{G}^{d}\::\:t_{\psi}\in D\}\::\:D\in\mathcal{D}_{n}^{d}\right\} \:.
\]
For a real $r\ge0$ we write $\mathcal{D}_{r}^{d}$ and $\mathcal{E}_{r}^{d}$
instead of $\mathcal{D}_{\left\lfloor r\right\rfloor }^{d}$ and $\mathcal{E}_{\left\lfloor r\right\rfloor }^{d}$.
Let $\mathcal{F}^{d}$ be the partition of $\mathbf{G}^{d}$ according
to the scaling part, that is
\[
\mathcal{F}^{d}=\left\{ \{\psi\in\mathbf{G}^{d}\::\:\lambda_{\psi}=\lambda'\}\::\:\lambda'\in\mathbb{R}\setminus\{0\}\right\} \:.
\]
When using these notations we shall omit the superscript $d$ whenever
it is clear from the context.

Given a measurable space $X$, a measurable partition of it $\mathcal{D}$
and a probability measure $\theta$ on $X$, we write $H(\theta,\mathcal{D})$
for the entropy of $\theta$ with respect to $\mathcal{D}$. That
is,
\[
H(\theta,\mathcal{D})=-\sum_{D\in\mathcal{D}}\theta(D)\log\theta(D)\:.
\]
When $\mathcal{D}$ is the partition of $X$ into singletons we write
$H(\theta)$ instead of $H(\theta,\mathcal{D})$.

The following lemma is well known. Variations of it has appeared before
in \cite{Ho1} and \cite{Ho2}. We include a short proof of it here
for completeness. During the proof we use freely some basic properties
of entropy which can be found for instance in \cite[Section 3.1]{Ho1}. 
\begin{lem}
\label{lem:approx of dim}Let $d\ge1$ and $t\in(\mathbb{R}^{d})^{m}$
be given, then
\[
\dim\mu_{t}=\underset{n}{\lim}\:\frac{1}{\chi n}H(\nu_{t}^{(n)},\mathcal{E}_{\chi n})\:.
\]
\end{lem}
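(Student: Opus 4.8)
The plan is to relate the left-hand side, which is defined via the local behaviour of $\mu_t$ on $\mathbb{R}^d$, to the right-hand side, which records the entropy of the measures $\nu_t^{(n)}$ on $\mathbf{G}^d$ at the relevant scale $2^{-\chi n}$. The bridge is the observation that if $\psi = \varphi_{t,w}$ for $w \in \Lambda^n$, then $\lambda_\psi = \lambda_w$ has absolute value roughly $2^{-\chi n}$ for ``typical'' $w$ (by the law of large numbers, $-\frac1n\log|\lambda_w| \to \chi$ for $p^{\otimes\mathbb{N}}$-a.e.\ sequence), and the attractor $K_t$ is bounded, say $K_t \subset B(0,R)$. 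Hence $\varphi_{t,w}(K_t)$ is a set of diameter $\asymp R\,2^{-\chi n}$ centred near $t_\psi = \varphi_{t,w}(0)$, and $\mu_t = \sum_{w \in \Lambda^n} p_w\, \varphi_{t,w}\mu_t$ is a convex combination, indexed by $w$, of measures each supported in such a small ball. This is exactly the kind of ``$n$-th generation'' decomposition that makes $H(\nu_t^{(n)},\mathcal{E}_{\chi n})$, the entropy of the distribution of the translation parts at dyadic scale $2^{-\chi n}$, a proxy for $H(\mu_t, \mathcal{D}_{\chi n})$.

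The key steps, in order, are as follows. First, restrict attention to the set $\Lambda^n_{\mathrm{good}}$ of words $w$ with $|{-\tfrac1n\log|\lambda_w|} - \chi| < \varepsilon$; by large deviations $p(\Lambda^n \setminus \Lambda^n_{\mathrm{good}}) \to 0$, so up to an $o(1)$-in-$\frac1n$ error in entropy we may work with the conditional measure on good words. Second, using $\mu_t = \sum_w p_w\,\varphi_{t,w}\mu_t$ and concavity/almost-convexity of entropy, show
\[
H(\mu_t, \mathcal{D}_{\chi n}) \le H(\nu_t^{(n)}, \mathcal{E}_{\chi n}) + \sum_{w} p_w\, H(\varphi_{t,w}\mu_t, \mathcal{D}_{\chi n}) + O(1),
\]
where the first term accounts for ``which cell the centre $t_\psi$ falls into'' and the summed term accounts for the spread of $\varphi_{t,w}\mu_t$ inside a cell of size $\asymp 2^{-\chi n}$. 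For good $w$, $\varphi_{t,w}$ contracts by $\asymp 2^{-\chi n}$, so $H(\varphi_{t,w}\mu_t, \mathcal{D}_{\chi n}) = H(\mu_t, \mathcal{D}_0) + O(\varepsilon n) = O(\varepsilon n)$, using that $H(\nu,\mathcal{D}_{k})$ is comparable for $\nu$ and its image under a map contracting by $\asymp 2^{-k}$, plus $H(\nu,\mathcal{D}_{k+j}) = H(\nu,\mathcal{D}_k) + O(j)$ for $\nu$ supported on a bounded set. Third, establish the reverse inequality in the same spirit: $H(\nu_t^{(n)},\mathcal{E}_{\chi n}) \le H(\mu_t,\mathcal{D}_{\chi n}) + O(\varepsilon n) + o(n)$, again by comparing $\mu_t$ with the ``pushforward of $\nu_t^{(n)}$'' picture and controlling the difference between the translation parts $\varphi_{t,w}(0)$ and a point of $\varphi_{t,w}(K_t)$. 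Fourth, divide by $\chi n$, let $n \to \infty$ and then $\varepsilon \to 0$; since $\frac{1}{\chi n}H(\mu_t, \mathcal{D}_{\chi n}) \to \dim\mu_t$ by exact dimensionality of $\mu_t$ (via \cite[Theorem 2.8]{FH} together with the standard equivalence between the entropy dimension and the pointwise dimension for exact-dimensional measures with bounded support), the two inequalities pinch the limit and give the claim. One should also note that the limit $\lim_n \frac1{\chi n} H(\nu_t^{(n)}, \mathcal{E}_{\chi n})$ exists, either because it is squeezed between the two bounds or, independently, by an almost-subadditivity argument using $\nu_t^{(n+k)} = \nu_t^{(n)} * \nu_t^{(k)}$ from \eqref{eq:=00003Dto convolution} together with $H(\theta * \theta', \mathcal{E}_{r+r'}) \le H(\theta,\mathcal{E}_r) + H(\theta',\mathcal{E}_{r'}) + O(1)$.

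The main obstacle I expect is the careful handling of the ``typical scale'' issue: the contraction ratios $\lambda_w$ are not all equal to $2^{-\chi n}$, they are spread out, and the partition $\mathcal{E}_{\chi n}$ is a fixed dyadic grid rather than one adapted to each $w$. Making the error terms genuinely $o(n)$ — rather than merely $O(\varepsilon n)$ with an $\varepsilon$ that cannot be sent to zero — requires separating the large-deviation contribution cleanly, and requires the elementary but slightly fiddly facts that refining a dyadic partition by $j$ levels changes entropy of a boundedly supported measure by $O(j)$, and that applying an affine map with contraction $\asymp 2^{-k}$ shifts the ``resolution $2^{-k}$'' entropy by a bounded amount. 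These are the standard entropy estimates referenced from \cite[Section 3.1]{Ho1} and \cite{Ho2}, so the proof is short; the only real content is assembling them in the right order and invoking exact dimensionality to identify $\lim_n \frac1{\chi n} H(\mu_t,\mathcal{D}_{\chi n})$ with $\dim\mu_t$.
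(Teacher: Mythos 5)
Your proposal is correct and is essentially the paper's own argument: restrict to words with typical contraction via the law of large numbers, compare $H(\mu_t,\mathcal{D}_{\chi n})$ with the entropy of the distribution of the centres $\varphi_{t,w}(0)$ (which is exactly $H(\nu_t^{(n)},\mathcal{E}_{\chi n})$) using standard concavity/convexity and perturbation estimates with $O(\varepsilon n)$ error, and then invoke exact dimensionality to identify $\lim_n\frac{1}{\chi n}H(\mu_t,\mathcal{D}_{\chi n})$ with $\dim\mu_t$. The only difference is cosmetic: the paper phrases the comparison through the coding maps $\Pi,\Pi_n$ on $\Lambda^{\mathbb{N}}$ and restricted Bernoulli measures rather than through the decomposition $\mu_t=\sum_w p_w\,\varphi_{t,w}\mu_t$.
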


\begin{proof}
Let $\epsilon>0$ and let $n\ge1$ be large with respect to $\epsilon$.
Let $\Pi,\Pi_{n}:\Lambda^{\mathbb{N}}\rightarrow\mathbb{R}^{d}$ be
such that for $(\omega_{k})_{k\ge0}=\omega\in\Lambda^{\mathbb{N}}$,
\[
\Pi\omega=\underset{k}{\lim}\:\varphi_{t,\omega_{0}...\omega_{k}}(0)\text{ and }\Pi_{n}\omega=\varphi_{t,\omega_{0}...\omega_{n-1}}(0)\:.
\]
Denote by $\xi$ the Bernoulli measure on $\Lambda^{\mathbb{N}}$
corresponding to $p$, i.e. $\xi=p^{\mathbb{N}}$. Note that $\Pi\xi=\mu_{t}$
and,
\begin{equation}
H(\nu_{t}^{(n)},\mathcal{E}_{\chi n})=H(\Pi_{n}\xi,\mathcal{D}_{\chi n})\:.\label{eq:H(nu_t^n) =00003D}
\end{equation}
By the law of large numbers, and by assuming $n$ is large enough
with respect to $\epsilon$, there exists a Borel set $E\subset\Lambda^{\mathbb{N}}$
with $\xi(E)>1-\epsilon$ and,
\begin{equation}
|\frac{1}{n}\log|\lambda_{\omega_{0}...\omega_{n-1}}|+\chi|<\epsilon\text{ for each }\omega\in E\:.\label{eq:close to Lyapunov}
\end{equation}

For a Borel set $F\subset\Lambda^{\mathbb{N}}$ with $\xi(F)>0$ write
$\xi_{F}=\frac{1}{\xi(F)}\xi|_{F}$, where $\xi|_{F}$ is the restriction
of $\xi$ to $F$. By the concavity of entropy and since $H(\Pi\xi_{E},\mathcal{D}_{\chi n})=O_{\lambda,t}(n)$,
\begin{eqnarray*}
H(\mu_{t},\mathcal{D}_{\chi n}) & \ge & \xi(E)H(\Pi\xi_{E},\mathcal{D}_{\chi n})+\xi(E^{c})H(\Pi\xi_{E^{c}},\mathcal{D}_{\chi n})\\
 & \ge & H(\Pi\xi_{E},\mathcal{D}_{\chi n})-O_{\lambda,t}(\epsilon n)\:.
\end{eqnarray*}
Similarly, by the convexity bound for entropy and since $H(\Pi\xi_{E^{c}},\mathcal{D}_{\chi n})=O_{\lambda,t}(n)$
(assuming $\xi(E^{c})>0$),
\begin{eqnarray*}
H(\mu_{t},\mathcal{D}_{\chi n}) & \le & \xi(E)H(\Pi\xi_{E},\mathcal{D}_{\chi n})+\xi(E^{c})H(\Pi\xi_{E^{c}},\mathcal{D}_{\chi n})+1\\
 & \le & H(\Pi\xi_{E},\mathcal{D}_{\chi n})+O_{\lambda,t}(\epsilon n)\:.
\end{eqnarray*}
Thus,
\begin{equation}
H(\mu_{t},\mathcal{D}_{\chi n})=H(\Pi\xi_{E},\mathcal{D}_{\chi n})+O_{\lambda,t}(\epsilon n),\label{eq:H(mu) close to rst}
\end{equation}
and a similar argument gives,
\begin{equation}
H(\Pi_{n}\xi,\mathcal{D}_{\chi n})=H(\Pi_{n}\xi_{E},\mathcal{D}_{\chi n})+O_{\lambda,t}(\epsilon n)\:.\label{eq:H(Pixi) close to rst}
\end{equation}

Note that by (\ref{eq:close to Lyapunov}),
\[
|\Pi\omega-\Pi_{n}\omega|=O_{\lambda,t}(2^{-n(\chi-\epsilon)})\text{ for all }\omega\in E,
\]
from which it follows easily that,
\[
H(\Pi\xi_{E},\mathcal{D}_{\chi n})=H(\Pi_{n}\xi_{E},\mathcal{D}_{\chi n})+O_{\lambda,t}(\epsilon n)\:.
\]
From this, (\ref{eq:H(nu_t^n) =00003D}), (\ref{eq:H(mu) close to rst})
and (\ref{eq:H(Pixi) close to rst}),
\[
H(\mu_{t},\mathcal{D}_{\chi n})=H(\nu_{t}^{(n)},\mathcal{E}_{\chi n})+O_{\lambda,t}(\epsilon n)\:.
\]
Additionally, since $\mu_{t}$ is exact dimensional we may assume,
\[
|\frac{1}{\chi n}H(\mu_{t},\mathcal{D}_{\chi n})-\dim\mu_{t}|<\epsilon\:.
\]
Hence,
\[
\frac{1}{\chi n}H(\nu_{t}^{(n)},\mathcal{E}_{\chi n})=\dim\mu_{t}+O_{\lambda,t}(\epsilon),
\]
which completes the proof of the lemma.
\end{proof}
\begin{cor}
\label{cor:ub on dim}Let $d\ge1$ and $t\in(\mathbb{R}^{d})^{m}$
be given, then
\[
\dim\mu_{t}\le\frac{1}{\chi n}H(\nu_{t}^{(n)})\text{ for all }n\ge1\:.
\]
\end{cor}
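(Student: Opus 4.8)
The plan is to deduce this as a more-or-less immediate consequence of Lemma~\ref{lem:approx of dim} together with standard subadditivity of entropy. Recall from (\ref{eq:=00003Dto convolution}) that $\nu_t^{(n+k)}=\nu_t^{(n)}*\nu_t^{(k)}$, and that convolution of measures on the group $\mathbf{G}^d$ does not increase entropy with respect to any partition that behaves well under the group operation. Concretely, since the map $\mathbf{G}^d\times\mathbf{G}^d\to\mathbf{G}^d$ given by composition pushes $\nu_t^{(n)}\times\nu_t^{(k)}$ to $\nu_t^{(n+k)}$, and since entropy does not increase under pushforward by a (measurable) map, we get $H(\nu_t^{(n+k)},\mathcal{E}_{\chi(n+k)})\le H(\nu_t^{(n)}\times\nu_t^{(k)})=H(\nu_t^{(n)})+H(\nu_t^{(k)})$ by independence of the two coordinates under the product measure. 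Hence the sequence $a_n:=H(\nu_t^{(n)})$ is subadditive, $a_{n+k}\le a_n+a_k$.

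First I would make precise the relation between $H(\nu_t^{(n)})$ and $H(\nu_t^{(n)},\mathcal{E}_{\chi n})$. Since $\mathcal{E}_{\chi n}$ is a partition of $\mathbf{G}^d$ and a coarsening never increases entropy, we always have $H(\nu_t^{(n)},\mathcal{E}_{\chi n})\le H(\nu_t^{(n)})=a_n$. Now by Lemma~\ref{lem:approx of dim},
\[
\dim\mu_t=\lim_n\frac{1}{\chi n}H(\nu_t^{(n)},\mathcal{E}_{\chi n})\le\liminf_n\frac{a_n}{\chi n}.
\]
By Fekete's subadditivity lemma, $\lim_n a_n/n=\inf_n a_n/n$, and in particular $\liminf_n a_n/(\chi n)=\inf_n a_n/(\chi n)\le a_n/(\chi n)$ for every fixed $n\ge1$. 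Combining the two displays gives $\dim\mu_t\le\frac{1}{\chi n}H(\nu_t^{(n)})$ for all $n\ge1$, which is exactly the claim.

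The only point that needs a touch of care is the subadditivity step: one must check that composition $\mathbf{G}^d\times\mathbf{G}^d\to\mathbf{G}^d$ really is a measurable map sending the product of the Dirac-combinations $\nu_t^{(n)}\times\nu_t^{(k)}$ to $\nu_t^{(n+k)}$ — but this is immediate from the definition $\nu_t^{(n)}=\sum_{w\in\Lambda^n}p_w\delta_{\varphi_{t,w}}$ and the identity $\varphi_{t,u}\circ\varphi_{t,v}=\varphi_{t,uv}$ for words $u\in\Lambda^n$, $v\in\Lambda^k$, so that $\nu_t^{(n)}*\nu_t^{(k)}=\sum_{u,v}p_up_v\,\delta_{\varphi_{t,u}\circ\varphi_{t,v}}=\sum_{w\in\Lambda^{n+k}}p_w\delta_{\varphi_{t,w}}=\nu_t^{(n+k)}$, matching (\ref{eq:=00003Dto convolution}). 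Thus there is no real obstacle here; the corollary is a short formal consequence of the lemma, subadditivity of entropy under convolution, and Fekete's lemma. If one prefers to avoid invoking Fekete's lemma, the even simpler route is: iterate subadditivity to get $a_{Nn}\le N a_n$, so $\liminf_N a_{Nn}/(\chi Nn)\le a_n/(\chi n)$, and then read off the inequality from Lemma~\ref{lem:approx of dim} applied along the subsequence $Nn$.
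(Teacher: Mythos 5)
Your proposal is correct and follows essentially the same route as the paper's own proof: subadditivity of $a_n=H(\nu_t^{(n)})$ via the convolution identity $\nu_t^{(n+k)}=\nu_t^{(n)}*\nu_t^{(k)}$, Fekete's lemma, and Lemma \ref{lem:approx of dim} combined with the coarsening bound $H(\nu_t^{(n)},\mathcal{E}_{\chi n})\le H(\nu_t^{(n)})$. The only blemish is the stray partition $\mathcal{E}_{\chi(n+k)}$ in your subadditivity display: the pushforward argument bounds the full entropy $H(\nu_t^{(n+k)})$ itself by $a_n+a_k$, which is what your stated conclusion $a_{n+k}\le a_n+a_k$ requires and what your justification in fact delivers.
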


\begin{proof}
By (\ref{eq:=00003Dto convolution}) it follows that for every $n,k\ge1$,
\[
H(\nu_{t}^{(n+k)})=H(\nu_{t}^{(n)}*\nu_{t}^{(k)})\le H(\nu_{t}^{(n)})+H(\nu_{t}^{(k)})\:.
\]
Thus by the Fekete lemma for sub-additive sequences,
\begin{equation}
\underset{n}{\lim}\:\frac{1}{n}H(\nu_{t}^{(n)})=\underset{n}{\inf}\:\frac{1}{n}H(\nu_{t}^{(n)})\:.\label{eq:lim=00003Dinf}
\end{equation}
Now by Lemma \ref{lem:approx of dim},
\[
\dim\mu_{t}=\underset{n}{\lim}\:\frac{1}{\chi n}H(\nu_{t}^{(n)},\mathcal{E}_{\chi n})\le\underset{n}{\lim}\:\frac{1}{\chi n}H(\nu_{t}^{(n)}),
\]
and the corollary follows from (\ref{eq:lim=00003Dinf}).
\end{proof}

\subsection{Affine irreducibility}

Given $d\ge1$ and $t\in(\mathbb{R}^{d})^{m}$ we say that $\Phi_{t}$
is affinely irreducible if there is no proper affine subspace $V$
of $\mathbb{R}^{d}$ with $\varphi_{t,j}(V)=V$ for all $0\le j\le m$.
\begin{lem}
\label{lem:affine irreducibility}Let $1\le d\le m$ and $(t_{j})_{j=1}^{m}=t\in\mathbb{R}^{dm}$
be with,
\[
\mathrm{span}\{t_{1},...,t_{m}\}=\mathbb{R}^{d}\:.
\]
Then $\Phi_{t}$ is affinely irreducible.
\end{lem}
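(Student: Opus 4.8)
The plan is to argue by contradiction, exploiting the fact that $\varphi_{t,0}$ is the pure homothety $x\mapsto\lambda_{0}x$, whose ratio satisfies $\lambda_{0}\ne0$ and $\lambda_{0}\ne1$ (the latter because $\lambda_{0}\in(-1,1)$). Suppose there were a proper, nonempty affine subspace $V\subsetneq\mathbb{R}^{d}$ with $\varphi_{t,j}(V)=V$ for every $0\le j\le m$. Fix $v_{0}\in V$ and let $W$ be the linear subspace of $\mathbb{R}^{d}$ such that $V=v_{0}+W$; note $W\ne\mathbb{R}^{d}$.

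First I would use the invariance under $\varphi_{t,0}$ to upgrade $V$ to a \emph{linear} subspace. Since $W$ is linear and $\lambda_{0}\ne0$ we have $\lambda_{0}W=W$, so
\[
V=\varphi_{t,0}(V)=\lambda_{0}v_{0}+\lambda_{0}W=\lambda_{0}v_{0}+W\:.
\]
Comparing with $V=v_{0}+W$ gives $(\lambda_{0}-1)v_{0}\in W$, and since $\lambda_{0}\ne1$ this forces $v_{0}\in W$, whence $V=v_{0}+W=W$. In particular $0\in V$.

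Next I would feed this back into the invariance under $\varphi_{t,j}$ for $1\le j\le m$. Using again that $W=V$ is linear and $\lambda_{j}\ne0$,
\[
W=\varphi_{t,j}(W)=\lambda_{j}W+t_{j}=W+t_{j},
\]
so $t_{j}\in W$ for every $1\le j\le m$. Hence $\mathrm{span}\{t_{1},\dots,t_{m}\}\subseteq W\subsetneq\mathbb{R}^{d}$, contradicting the hypothesis $\mathrm{span}\{t_{1},\dots,t_{m}\}=\mathbb{R}^{d}$. This contradiction shows no such $V$ exists, i.e.\ $\Phi_{t}$ is affinely irreducible.

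There is essentially no hard step here; the only point requiring a little care is the first one, namely passing from an \emph{affine} invariant subspace to a \emph{linear} one, which is exactly where the special form of $\varphi_{t,0}$ (a homothety fixing the origin, with ratio different from $1$) is used. One should also observe that the degenerate cases are covered by the same reasoning: the empty set is usually excluded as a "proper affine subspace", and a one‑point invariant subspace $\{v\}$ would force $\lambda_{0}v=v$, hence $v=0$, and then $t_{j}=\varphi_{t,j}(0)=0$ for all $j$, again contradicting the spanning assumption since $d\ge1$.
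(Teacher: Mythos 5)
Your proof is correct, but it follows a genuinely different route from the paper's. You argue directly on the putative invariant affine subspace $V=v_{0}+W$: the invariance under the homothety $\varphi_{t,0}(x)=\lambda_{0}x$, together with $\lambda_{0}\ne 0,1$, forces $V$ to be a linear subspace containing $0$, and then invariance under each $\varphi_{t,j}$ with $\lambda_{j}\ne 0$ forces $t_{j}\in V$, contradicting $\mathrm{span}\{t_{1},\dots,t_{m}\}=\mathbb{R}^{d}$ since $V$ is proper; your handling of the degenerate (single-point) case is in fact already subsumed by the general argument with $W=\{0\}$. The paper instead goes through the attractor: it observes that $0\in K_{t}$ (fixed point of $\varphi_{t,0}$) and that for $d$ linearly independent translations $t_{j}$ the fixed points $x_{j}=t_{j}/(1-\lambda_{j})$ are nonzero points of $K_{t}\cap\mathrm{span}\{t_{j}\}$, so the affine span of $K_{t}$ is all of $\mathbb{R}^{d}$, whereas an invariant proper affine subspace would have to contain $K_{t}$. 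Both proofs exploit the same two structural facts (that $\varphi_{t,0}$ fixes the origin and that the contraction ratios are nonzero with modulus less than $1$), but yours is purely algebraic and self-contained, needing neither the attractor nor the fact that fixed points of the maps lie in it, while the paper's is a short geometric argument riding on standard properties of $K_{t}$. Either is a complete proof of the lemma.
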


\begin{proof}
Without loss of generality we may assume that $t_{1},...,t_{d}$ are
linearly independent. For $1\le j\le d$ let $x_{j}$ be the fixed
point of $\varphi_{t,j}$, then $x_{j}\ne0$ and $x_{j}\in K_{t}\cap\mathrm{span}\{t_{j}\}$.
Since $\varphi_{t,0}(0)=0$ we also have $0\in K_{t}$. From these
facts it follows that the affine span of $K_{t}$ is equal to $\mathbb{R}^{d}$.

On the other hand, if $s\in(\mathbb{R}^{d})^{m}$ is such that $\Phi_{s}$
is not affinely irreducible then there exists a proper affine subspace
$V$ of $\mathbb{R}^{d}$ with $\varphi_{s,j}(V)=V$ for all $0\le j\le m$.
From this it clearly follows that $K_{s}\subset V$, which implies
that the affine span of $K_{s}$ is contained in $V$. Since $V$
is proper we must have $t\ne s$, which completes the proof of the
lemma.
\end{proof}

\subsection{A Theorem of Hochman}

The following statement follows almost directly from a theorem of
Hochman regarding self-similar measures in $\mathbb{R}^{d}$. Given
two partitions $\mathcal{C}_{1}$ and $\mathcal{C}_{2}$ of some space
$X$ we denote by $\mathcal{C}_{1}\vee\mathcal{C}_{2}$ their common
refinement, that is
\[
\mathcal{C}_{1}\vee\mathcal{C}_{2}=\{C_{1}\cap C_{2}\::\:C_{1}\in\mathcal{C}_{1}\text{ and }C_{2}\in\mathcal{C}_{2}\}\:.
\]
 
\begin{thm}
\label{thm:follows from =00005BHo2=00005D}Let $d\ge1$ and $t\in(\mathbb{R}^{d})^{m}$
be given. Suppose that $\Phi_{t}$ is affinely irreducible and that
$\dim\mu_{t}<\beta$. Then for every $q\ge\chi$,
\[
\underset{n}{\lim}\:\frac{1}{\chi n}H(\nu_{t}^{(n)},\mathcal{E}_{qn}\vee\mathcal{F})=\dim\mu_{t}\:.
\]
\end{thm}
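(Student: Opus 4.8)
The plan is to derive this from Hochman's higher-dimensional theorem on self-similar measures (from \cite{Ho2}), which asserts that for an affinely irreducible self-similar measure $\mu_t$ on $\mathbb{R}^d$, exactly one of the following holds: either $\dim \mu_t = \beta$, or the scaled copies of $\mu_t$ fail to be ``super-exponentially separated'' in a suitable sense. Since we assume $\dim\mu_t < \beta$, the second alternative must hold. The key point is to translate that conclusion into the stated convergence of normalized entropies. First I would recall that, by Lemma \ref{lem:approx of dim} together with subadditivity (as in Corollary \ref{cor:ub on dim}), the sequence $\frac{1}{\chi n} H(\nu_t^{(n)}, \mathcal{E}_{\chi n})$ converges to $\dim\mu_t$, and since refining a partition only increases entropy while $\mathcal{E}_{qn}\vee\mathcal{F}$ refines $\mathcal{E}_{\chi n}$ when $q\ge\chi$, we immediately get the lower bound
\[
\underset{n}{\liminf}\:\frac{1}{\chi n}H(\nu_{t}^{(n)},\mathcal{E}_{qn}\vee\mathcal{F})\ge\dim\mu_{t}.
\]
So the whole content is the matching upper bound: the extra resolution provided by going from scale $\chi n$ down to scale $qn$ and by recording the contraction ratio exactly (the partition $\mathcal{F}$) does not increase the exponential growth rate of the entropy beyond $\chi n\cdot\dim\mu_t$.

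To prove the upper bound I would argue as follows. Write $H(\nu_t^{(n)}, \mathcal{E}_{qn}\vee\mathcal{F}) = H(\nu_t^{(n)},\mathcal{F}) + H(\nu_t^{(n)}, \mathcal{E}_{qn}\mid \mathcal{F})$. The first term is $O(\log n)$ since a word $w\in\Lambda^n$ has $\lambda_w$ taking at most polynomially many values in $n$ (the multiset of exponents of $\lambda_0,\dots,\lambda_m$), hence contributes nothing to the normalized limit. For the second term, decompose $\nu_t^{(n)}$ over atoms of $\mathcal{F}$: on the atom where $\lambda_\psi = c$, the conditional measure lives on a set of similarities all with the same linear part $c$, i.e. is essentially a measure on $\mathbb{R}^d$ via the translation coordinate. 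Now invoke Hochman's theorem from \cite{Ho2} in the form that, under affine irreducibility and $\dim\mu_t<\beta$, the ``$n$-th level entropy at all scales finer than $\chi n$'' collapses: concretely, for every $q\ge\chi$ one has $H(\nu_t^{(n)},\mathcal{E}_{qn}\mid \mathcal{E}_{\chi n}) = o(n)$. Combined with the convergence from Lemma \ref{lem:approx of dim} and the chain rule $H(\nu_t^{(n)},\mathcal{E}_{qn}) = H(\nu_t^{(n)},\mathcal{E}_{\chi n}) + H(\nu_t^{(n)},\mathcal{E}_{qn}\mid\mathcal{E}_{\chi n})$, this yields $\frac{1}{\chi n}H(\nu_t^{(n)},\mathcal{E}_{qn})\to\dim\mu_t$, and adding back the $O(\log n)$ term for $\mathcal{F}$ finishes the proof.

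The main obstacle is extracting precisely the statement ``$H(\nu_t^{(n)},\mathcal{E}_{qn}\mid\mathcal{E}_{\chi n}) = o(n)$'' from the results of \cite{Ho2}, since Hochman's inverse theorem is phrased in terms of entropy increase under convolution and the failure of exponential separation of the rescaled IFS, rather than directly in terms of dyadic-scale entropies of $\nu_t^{(n)}$. I would need to run the standard bootstrapping argument: if the conditional entropy at scale $qn$ beyond scale $\chi n$ were $\gtrsim\epsilon n$ along a subsequence, then using \eqref{eq:=00003Dto convolution} and the entropy-growth-under-convolution machinery (together with affine irreducibility, which ensures the measure is not supported on a proper affine subspace and so Hochman's criterion applies in $\mathbb{R}^d$) one would manufacture a strict increase in $\dim\mu_t$ relative to its own definition, a contradiction — unless $\dim\mu_t=\beta$, which is excluded. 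The bookkeeping of how the separation parameter $\Delta_n$ interacts with the scales $\chi n$ versus $qn$, and verifying that the $\mathcal{F}$-coordinate genuinely contributes only logarithmically (so that conditioning on $\mathcal{F}$ does not disturb the argument), are the technical points requiring care; everything else is routine manipulation of entropy inequalities.
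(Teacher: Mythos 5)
Your overall route is the same as the paper's (apply Hochman's theorem from \cite{Ho2} to get that the entropy of $\nu_t^{(n)}$ does not grow between scales $\chi n$ and $qn$, combine with Lemma \ref{lem:approx of dim}, and absorb the partition $\mathcal{F}$ via the polynomial bound $\#\{\lambda_w:w\in\Lambda^n\}\le n^{m+1}$), but there is a genuine gap in how you invoke \cite{Ho2}. You state Hochman's higher-dimensional result as a two-way dichotomy: either $\dim\mu_t=\beta$ or entropy/separation collapses. That is the one-dimensional statement from \cite{Ho1}. In $\mathbb{R}^d$ with $d\ge2$, \cite[Theorem 1.5]{Ho2} has a third alternative which your sketch never addresses: there may exist a nontrivial linear subspace $V$ such that the conditional measures $\mu_{t,x}^{V}$ of $\mu_t$ on translates of $V$ have full dimension $\dim V$ for $\mu_t$-a.e.\ $x$. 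Under affine irreducibility alone this alternative is not excluded, and if it held you could not conclude the entropy collapse $H(\nu_t^{(n)},\mathcal{E}_{qn}\mid\mathcal{E}_{\chi n})=o(n)$. The paper rules it out by citing Falconer--Jin (\cite[Theorem 3.1 and Corollary 3.1]{FJ}): the conditionals are exact dimensional and satisfy $\dim\mu_t\ge\dim\mu_{t,x}^{V}$, so the third alternative would force $\dim\mu_t\ge\dim V\ge1\ge\beta$, contradicting $\dim\mu_t<\beta$. Some such argument is indispensable, and it is the only genuinely $d$-dimensional point in the proof; without it your deduction ``$\dim\mu_t<\beta$ implies the second alternative'' is unjustified.

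The same omission undermines your fallback plan of re-deriving the entropy collapse by a bootstrapping/contradiction argument: in $\mathbb{R}^d$, entropy need not grow under convolution when one factor is saturated along a subspace, which is precisely why Hochman's inverse theorem in $\mathbb{R}^d$ carries the extra alternative. So the proposed contradiction (``a strict increase in $\dim\mu_t$'') cannot be manufactured without first disposing of the saturation case, and in any event this re-derivation is unnecessary: \cite[Theorem 1.5]{Ho2} already gives the second alternative in exactly the form needed, namely $\frac{1}{\chi n}\bigl(H(\nu_{t}^{(n)},\mathcal{E}_{qn})-H(\nu_{t}^{(n)},\mathcal{E}_{\chi n})\bigr)\to0$ for all $q\ge\chi$. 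Your treatment of the $\mathcal{F}$-coordinate and the lower bound via monotonicity of entropy under refinement are fine and match the paper.
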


\begin{proof}
Given a linear subspace $V$ of $\mathbb{R}^{d}$ let $\pi_{V}$ be
the orthogonal projection onto $V$. Denote by $\{\mu_{t,x}^{V}\}_{x\in\mathbb{R}^{d}}$
the disintegration of $\mu_{t}$ with respect to $\pi_{V^{\perp}}^{-1}(\mathcal{B})$,
where $V^{\perp}$ is the orthogonal complement of $V$ and $\mathcal{B}$
is the Borel $\sigma$-algebra of $\mathbb{R}^{d}$. By \cite[Theorem 3.1]{FJ}
it follows that $\mu_{t,x}^{V}$ is exact dimensional for $\mu_{t}$-a.e.
$x\in\mathbb{R}^{d}$.

Since $\Phi_{t}$ is affinely irreducible it follows by \cite[Theorem 1.5]{Ho2}
that at least one of the following three alternatives is satisfied,
\begin{enumerate}
\item $\dim\mu=\min\{d,H(p)/\chi\}$;
\item $\underset{n}{\lim}\:\frac{1}{\chi n}\left(H(\nu_{t}^{(n)},\mathcal{E}_{qn})-H(\nu_{t}^{(n)},\mathcal{E}_{\chi n})\right)=0$
for all $q\ge\chi$;
\item there exists a linear subspace $V$ of $\mathbb{R}^{d}$ with $\dim V>0$
and,
\[
\dim\mu_{t,x}^{V}=\dim V\text{ for }\mu\text{-a.e. }x\in\mathbb{R}^{d}\:.
\]
\end{enumerate}
Since $\dim\mu_{t}<\beta$ the first option is clearly not possible.
By \cite[Corollary 3.1]{FJ} it follows that,
\[
\dim\mu_{t}\ge\dim\mu_{t,x}^{V}\text{ for }\mu\text{-a.e. }x\in\mathbb{R}^{d}\:.
\]
From this and $\beta\le1$ it follows that the third alternative is
also not possible. Thus the second alternative must hold, and so by
Lemma \ref{lem:approx of dim} it follows that for every $q\ge\chi$,
\begin{equation}
\underset{n}{\lim}\:\frac{1}{\chi n}H(\nu_{t}^{(n)},\mathcal{E}_{qn})=\underset{n}{\lim}\:\frac{1}{\chi n}H(\nu_{t}^{(n)},\mathcal{E}_{\chi n})=\dim\mu_{t}\:.\label{eq:ent(E_qn)=00003Ddim}
\end{equation}

Additionally, for each $n\ge1$ the cardinality of the set $\{\lambda_{w}\::\:w\in\Lambda^{n}\}$
is at most $n^{m+1}$. Hence,
\[
H(\nu_{t}^{(n)},\mathcal{E}_{qn}\vee\mathcal{F})\le H(\nu_{t}^{(n)},\mathcal{E}_{qn})+H(\nu_{t}^{(n)},\mathcal{F})\le H(\nu_{t}^{(n)},\mathcal{E}_{qn})+\log n^{m+1},
\]
which implies,
\[
\underset{n}{\lim}\:\frac{1}{\chi n}\left(H(\nu_{t}^{(n)},\mathcal{E}_{qn}\vee\mathcal{F})-H(\nu_{t}^{(n)},\mathcal{E}_{qn})\right)=0\:.
\]
This together with (\ref{eq:ent(E_qn)=00003Ddim}) completes the proof
of the theorem.
\end{proof}

\subsection{Lower semicontinuity of dimension}

The following theorem follows directly from \cite[Theorem 1.8]{Fe},
where it is proven in much greater generality.
\begin{thm}
\label{thm:lower semi-cont of dim}For every $d\ge1$ the function
which takes $t\in(\mathbb{R}^{t})^{m}$ to $\dim\mu_{t}$ is lower
semi-continuous.
\end{thm}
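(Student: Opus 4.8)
The plan is to deduce lower semicontinuity directly from the entropy formula of Lemma \ref{lem:approx of dim} (it is also, as already noted, a special case of \cite[Theorem 1.8]{Fe}). Fix $d\ge1$ and write $a_{n}(t)=H(\nu_{t}^{(n)},\mathcal{E}_{\chi n})$ for $t\in(\mathbb{R}^{d})^{m}$, so that $\dim\mu_{t}=\underset{n}{\lim}\,\frac{1}{\chi n}a_{n}(t)$. I would establish two facts: \emph{(i)}~for every $n$ and every $t$ there is a neighbourhood $U$ of $t$ with $a_{n}(s)\ge a_{n}(t)-2d$ for all $s\in U$; and \emph{(ii)}~for every $\epsilon>0$ there is $n_{0}(\epsilon)$ such that $\dim\mu_{s}\ge\frac{1}{\chi n}a_{n}(s)-\epsilon$ for all $n\ge n_{0}(\epsilon)$ and all $s$ in any prescribed bounded subset of $(\mathbb{R}^{d})^{m}$. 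Granting these, lower semicontinuity at a point $t$ follows immediately: given $\epsilon>0$, choose $n\ge n_{0}(\epsilon)$ large enough that moreover $\frac{1}{\chi n}a_{n}(t)>\dim\mu_{t}-\epsilon$ and $\frac{2d}{\chi n}<\epsilon$, let $U$ be the neighbourhood from~\emph{(i)}, and observe that for every $s\in U$,
\[ \dim\mu_{s}\ge\tfrac{1}{\chi n}a_{n}(s)-\epsilon\ge\tfrac{1}{\chi n}a_{n}(t)-2\epsilon>\dim\mu_{t}-3\epsilon; \]
letting $\epsilon\downarrow0$ gives $\underset{s\rightarrow t}{\liminf}\,\dim\mu_{s}\ge\dim\mu_{t}$.

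Fact~\emph{(i)} is elementary. Set $\eta_{t}^{(n)}=\sum_{w\in\Lambda^{n}}p_{w}\delta_{t_{\varphi_{t,w}}}$, a finitely supported measure on $\mathbb{R}^{d}$, so that $a_{n}(t)=H(\eta_{t}^{(n)},\mathcal{D}_{\chi n})$ and the atoms of $\eta_{t}^{(n)}$ depend continuously on $t$ while its weights do not. Choose $u\in[0,1)^{d}$ so that no atom of $\eta_{t}^{(n)}$ lies on the boundary of a cell of $\mathcal{D}_{\chi n}+u$; then for $s$ near $t$ each atom of $\eta_{s}^{(n)}$ lies in the same cell of $\mathcal{D}_{\chi n}+u$ as the corresponding atom of $\eta_{t}^{(n)}$, whence $H(\eta_{s}^{(n)},\mathcal{D}_{\chi n}+u)=H(\eta_{t}^{(n)},\mathcal{D}_{\chi n}+u)$; since every cell of $\mathcal{D}_{\chi n}$ meets at most $2^{d}$ cells of $\mathcal{D}_{\chi n}+u$ and conversely, $|H(\theta,\mathcal{D}_{\chi n})-H(\theta,\mathcal{D}_{\chi n}+u)|\le d$ for every $\theta$, and \emph{(i)} follows. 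Fact~\emph{(ii)} is the substantial part. By~(\ref{eq:=00003Dto convolution}), $\nu_{s}^{(jn)}=(\nu_{s}^{(n)})^{*j}$, and one realises $\eta_{s}^{(jn)}$ through $j$ independent length-$n$ blocks $w^{(1)},\dots,w^{(j)}$. Condition on the event that at least $(1-\epsilon)j$ of the blocks satisfy $|\log|\lambda_{w^{(i)}}|+\chi n|\le\epsilon n$; by the law of large numbers for the block scaling exponents this has probability $\ge1-\epsilon$ once $j$ is large, uniformly in $s$. Arguing as in the proof of Lemma \ref{lem:approx of dim}, on this event each such ``good'' block contributes to $H(\eta_{s}^{(jn)},\mathcal{D}_{\chi jn})$ the entropy of a rescaled copy of $\eta_{s}^{(n)}$ seen at a scale that is dyadic-aligned up to an error of order $\epsilon n$, hence at least $a_{n}(s)-O_{d}(\epsilon n)$, while the remaining blocks together with the cost of the conditioning account for only $O(\epsilon jn)$; thus $a_{jn}(s)\ge j\,a_{n}(s)-O(\epsilon jn)$, the implied constants depending only on $\lambda,p,d$ and a bound for $\mathrm{diam}\,K_{s}$. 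Dividing by $\chi jn$, letting $j\rightarrow\infty$, and using $\dim\mu_{s}=\underset{j}{\lim}\,\frac{1}{\chi jn}a_{jn}(s)$ gives \emph{(ii)}; the dependence on $\mathrm{diam}\,K_{s}$ is harmless since that quantity is bounded on bounded sets of parameters.

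The main obstacle is \emph{(ii)}, specifically the entropy chaining inside it: one has to carry the decomposition across the successive scales $\chi n,2\chi n,\dots$ so that the good blocks' contributions genuinely add up, rather than being lost to overlaps among the rescaled copies of $\eta_{s}^{(n)}$ or to the atypically-scaled words---this is exactly what the conditioning is designed to prevent---and one has to verify that the error terms are at once $o(jn)$ and uniform in the parameter on bounded sets, the latter being why the conclusion is intrinsically a local statement and why $\mathrm{diam}\,K_{s}$ enters. Since \emph{(ii)} only recombines estimates that are by now standard, in a write-up one may as well invoke \cite[Theorem 1.8]{Fe} directly.
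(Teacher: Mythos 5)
The paper's own proof of this theorem is a one-line citation of \cite[Theorem 1.8]{Fe}, so your closing remark---that in a write-up one may simply invoke that result---coincides with what the paper actually does and is perfectly adequate; your fact \emph{(i)} is also correct. The genuine problem is the direct argument you sketch for fact \emph{(ii)}, exactly at the point you flag as the main obstacle. Conditioning on the event that most blocks $w^{(i)}$ satisfy $|\log|\lambda_{w^{(i)}}|+\chi n|\le\epsilon n$ controls each block's scaling separately, but the rescaled copy of $\eta_{s}^{(n)}$ contributed by the $i$-th block sits at scale $|\lambda_{w^{(1)}\cdots w^{(i-1)}}|$, and on your event this cumulative scale is aligned with the dyadic level $\chi n(i-1)$ only up to an error of order $\epsilon n(i-1)$; moreover, when the $|\lambda_{j}|$ are not all equal the drift is typically of order $\sqrt{ni}$ by the central limit theorem, so no strengthening of the per-block event removes it. Hence the claim that each good block is ``dyadic-aligned up to an error of order $\epsilon n$'' and contributes at least $a_{n}(s)-O_{d}(\epsilon n)$ to $H(\eta_{s}^{(jn)},\mathcal{D}_{\chi jn})$ fails for blocks of large index: measured against the fixed breakpoints $\chi n,2\chi n,\ldots$ the misalignment loss of block $i$ is of order $d\bigl|\log|\lambda_{w^{(1)}\cdots w^{(i-1)}}|+\chi n(i-1)\bigr|$, and summing over $i$ gives an error superlinear in $j$ (order $\epsilon nj^{2}$ in the worst case, $\sqrt{n}\,j^{3/2}$ typically), not $O(\epsilon jn)$. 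After dividing by $\chi jn$ and letting $j\rightarrow\infty$ with $n,\epsilon$ fixed, the claimed inequality $a_{jn}(s)\ge j\,a_{n}(s)-O(\epsilon jn)$ therefore does not follow from the decomposition as you describe it, and \emph{(ii)} is left unproved.

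The gap is repairable, but it needs a different decomposition from the one you describe: cut at the random breakpoints $-\log|\lambda_{w^{(1)}\cdots w^{(i-1)}}|$ rather than at the fixed scales $\chi n,2\chi n,\ldots$, for instance by conditioning on the entire sequence of scaling parts (this costs only $H(\nu_{s}^{(n)},\mathcal{F})=O(m\log n)$ per block, since $\lambda_{w}$ takes at most $n^{m+1}$ values for $w\in\Lambda^{n}$), and then discard the $O(\epsilon j)$ blocks whose scale window is not contained in $[0,\chi jn]$. This yields $a_{jn}(s)\ge j\,a_{n}(s)-O(\epsilon jn+j\log n)$ uniformly on bounded parameter sets, which gives \emph{(ii)} and hence the theorem. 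Note that in the homogeneous case $|\lambda_{0}|=\ldots=|\lambda_{m}|$ there is no drift and your scheme works verbatim, which is perhaps why the difficulty is easy to overlook. As written, either carry out this repair or---as the paper does---quote \cite[Theorem 1.8]{Fe} and stop.
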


\subsection{Notations for polynomials and an exponential lower bound}

Let $\mathbb{N}$ be the set of non-negative integers. For a multi-index
$(\alpha_{0},...,\alpha_{m})=\alpha\in\mathbb{N}^{m+1}$ write,
\[
X^{\alpha}=X_{0}^{\alpha_{0}}\cdot...\cdot X_{m}^{\alpha_{m}},\:\lambda^{\alpha}=\lambda_{0}^{\alpha_{0}}\cdot...\cdot\lambda_{m}^{\alpha_{m}}\text{ and }|\alpha|=\alpha_{0}+...+\alpha_{m},
\]
where $X_{0},...,X_{m}$ are formal variables and $\lambda=(\lambda_{j})_{j=0}^{m}$
was fixed in Section \ref{subsec:Some-notations}. Given a polynomial
\[
\sum_{\alpha\in\mathbb{N}^{m+1}}c_{\alpha}X^{\alpha}=P(X)\in\mathbb{Z}[X_{0},...,X_{m}]
\]
denote by $\deg(P)$ the total degree of $P$, that is
\[
\deg(P)=\max\{|\alpha|\::\:\alpha\in\mathbb{N}^{m+1}\text{ and }c_{\alpha}\ne0\}\:.
\]
Also set,
\[
\ell_{\infty}(P)=\underset{\alpha\in\mathbb{N}^{m+1}}{\max}\:|c_{k}|\text{ and }\ell_{1}(P)=\sum_{\alpha\in\mathbb{N}^{m+1}}|c_{k}|\:.
\]
For $l,n\ge1$ write,
\[
\mathcal{P}(l,n)=\{P\in\mathbb{Z}[X_{0},...,X_{m}]\::\:\ell_{\infty}(P)\le l\text{ and }\deg(P)<n\}\:.
\]
Note that for $P_{1},...,P_{k}\in\mathbb{Z}[X_{0},...,X_{m}]$,
\[
\ell_{\infty}(\Pi_{j=1}^{k}P_{j})\le\ell_{1}(\Pi_{j=1}^{k}P_{j})\le\Pi_{j=1}^{k}\ell_{1}(P_{j})\:.
\]
From this and since $\ell_{1}(P)\le ln^{m+1}$ for $P\in\mathcal{P}(l,n)$,
\begin{equation}
\Pi_{j=1}^{k}P_{j}\in\mathcal{P}(l^{k}n^{(m+1)k},kn)\text{ for every }P_{1},...,P_{k}\in\mathcal{P}(l,n)\:.\label{eq:control on product of poly}
\end{equation}

The following lemma will be used many times during the proof of Theorem
\ref{thm:main thm inductive}. It is the only place in which the fact
that $\lambda_{0},...,\lambda_{m}$ are algebraic is used.
\begin{lem}
\label{lem:lb on val of poly}There exists a constant $M=M(m,\lambda)>1$
such that for every $l,n\ge1$ and $P\in\mathcal{P}(l,n)$ we have
$P(\lambda)=0$ or $|P(\lambda)|\ge l^{-M}M^{-n}$.
\end{lem}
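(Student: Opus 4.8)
The plan is to reduce the multivariate estimate to a univariate one by expressing each $\lambda_j$ as a polynomial in a single algebraic generator, and then to invoke the standard Liouville-type lower bound for nonzero values of integer polynomials at algebraic numbers. Concretely, let $K=\mathbb{Q}(\lambda_0,\dots,\lambda_m)$ and write $K=\mathbb{Q}(\theta)$ for a primitive element $\theta$, which is algebraic of some degree $D=D(m,\lambda)$. For each $0\le j\le m$ choose a representation $\lambda_j=Q_j(\theta)$ with $Q_j\in\mathbb{Q}[Y]$ of degree $<D$; clearing denominators, there is an integer $A=A(m,\lambda)\ge1$ with $A\lambda_j=R_j(\theta)$, $R_j\in\mathbb{Z}[Y]$, $\deg R_j<D$, and with coefficients bounded by a constant depending only on $m,\lambda$. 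I would then substitute into $P\in\mathcal{P}(l,n)$: since $\deg_{X_j}P<n$ for each $j$, the quantity $A^{(m+1)n}P(\lambda)$ equals $S(\theta)$ for a single-variable polynomial $S\in\mathbb{Z}[Y]$, obtained by replacing each $X_j$ by $R_j(Y)$ and collecting. A routine estimate using \eqref{eq:control on product of poly}-type bookkeeping gives $\deg S\le (m+1)(n-1)D$ and $\ell_\infty(S)\le l\cdot C^{n}$ for a constant $C=C(m,\lambda)$, since each monomial $X^\alpha$ with $|\alpha|<(m+1)n$ contributes, after substitution, a polynomial in $Y$ of controlled degree and coefficient size.

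Next I would apply the classical Liouville inequality (the one-variable special case): if $S\in\mathbb{Z}[Y]$, $\theta$ is algebraic of degree $D$ with minimal polynomial of content $1$, and $S(\theta)\ne0$, then
\[
|S(\theta)|\ge c_\theta^{\,\deg S}\,\ell_\infty(S)^{-(D-1)}
\]
for a constant $c_\theta\in(0,1)$ depending only on $\theta$ (hence only on $m,\lambda$); this is standard, obtained by multiplying $S(\theta)$ by its conjugates to get a nonzero rational integer and bounding the conjugates crudely in terms of $\deg S$, $\ell_\infty(S)$ and $\max_i|\theta^{(i)}|$. Applying this with our $S$ and the bounds above yields
\[
|P(\lambda)| = A^{-(m+1)n}|S(\theta)| \ge A^{-(m+1)n}\, c_\theta^{(m+1)(n-1)D}\,\bigl(l\,C^{n}\bigr)^{-(D-1)} \ge l^{-M}M^{-n}
\]
once $M=M(m,\lambda)$ is chosen large enough to absorb $A^{m+1}$, $c_\theta^{-(m+1)D}$, $C^{D-1}$ and the exponent $D-1$ on $l$; and of course if $S(\theta)=0$ then $P(\lambda)=0$. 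This gives the claimed dichotomy.

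I do not expect a serious obstacle here: the only subtlety is the bookkeeping that keeps $\deg S$ linear in $n$ and $\ell_\infty(S)$ of the form $l\cdot C^n$, which matters because the Liouville bound is exponential in $\deg S$ and polynomial in $\ell_\infty(S)$ — so a superlinear degree bound would be fatal, but the bound $\deg_{X_j}P<n$ for each of the $m+1$ variables is exactly what makes $\deg S\le (m+1)(n-1)D$ work. One should be slightly careful that substituting $X_j\mapsto R_j(Y)$ may raise the coefficient size multiplicatively in the number of monomials, but the number of monomials of $P$ is at most $n^{m+1}$, which is only polynomial in $n$ and hence harmlessly absorbed into $C^n$ (or even into adjusting $M$). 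An alternative, essentially equivalent, route would be to bound the house and denominator of the algebraic number $P(\lambda)$ directly via multiplicativity of the house and of denominators under the ring operations, and then use $|\mathrm{Norm}(P(\lambda))|\ge1$ for the nonzero algebraic integer obtained after clearing denominators; I would present whichever is cleaner, but both reduce to the same elementary input and the same exponential-in-$n$, polynomial-in-$l$ shape of the final bound.
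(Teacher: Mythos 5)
Your proof is correct, and it reaches the same Liouville-type dichotomy (polynomial in $l$, exponential in $n$) that the lemma needs, but by a different implementation than the paper. The paper's proof is a two-step citation of height machinery from Masser's book: it bounds the height of the algebraic number $P(\lambda)$ directly by $\ell_{1}(P)\prod_{j}H(\lambda_{j})^{n}\le ln^{m+1}H^{(m+1)n}$ (Proposition 14.7 there) and then invokes the bound $|\eta|\ge H(\eta)^{-D}$ for nonzero $\eta$ in a field of degree $D$ (Proposition 14.13), with no reduction to one variable. You instead pass to a primitive element $\theta$ of $\mathbb{Q}(\lambda_{0},\dots,\lambda_{m})$, clear denominators so that $A^{(m+1)n}P(\lambda)=S(\theta)$ with $S\in\mathbb{Z}[Y]$ of degree $O_{m,\lambda}(n)$ and $\ell_{\infty}(S)\le l\,C^{n}$, and prove the needed lower bound by the classical resultant/norm argument for univariate polynomials; your closing remark about bounding the house and denominator of $P(\lambda)$ and using $|\mathrm{Norm}|\ge1$ is essentially the content of the Masser propositions the paper quotes. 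What your route buys is self-containedness (only the primitive element theorem and the elementary Liouville inequality), at the cost of some bookkeeping; what the paper's route buys is brevity, since the multivariate height inequality does the bookkeeping for you. Two cosmetic points: your stated Liouville inequality $|S(\theta)|\ge c_{\theta}^{\deg S}\ell_{\infty}(S)^{-(D-1)}$ silently absorbs a factor like $(\deg S+1)^{-(D-1)}$ coming from the crude conjugate bounds (harmless, since it is subexponential in $n$ and can be folded into $c_{\theta}^{\deg S}$ or into $M$), and your degree bound $\deg S\le(m+1)(n-1)D$ is cruder than necessary (total degree $<n$ already gives $\deg S\le (n-1)D$), but both only affect the value of $M$, not the validity of the argument.
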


\begin{proof}
Given an algebraic $\eta\in\mathbb{C}$ write $H(\eta)$ for its hight,
as defined in \cite[Chapter 14]{Mas}. Write,
\[
H=\underset{0\le j\le m}{\max}\:H(\lambda_{j}),
\]
and denote by $D$ the degree of the field extension $\mathbb{Q}[\lambda_{0},...,\lambda_{m}]/\mathbb{Q}$.

Let $l,n\ge1$ and $P\in\mathcal{P}(l,n)$ be given. Since $P\in\mathcal{P}(l,n)$
we have $\ell_{1}(P)\le ln^{m+1}$. Thus by \cite[Proposition 14.7]{Mas},
\[
H(P(\lambda))\le\ell_{1}(P)\prod_{j=0}^{m}H(\lambda_{j})^{n}\le ln^{m+1}\cdot H^{(m+1)n}\:.
\]
Since $P(\lambda)\in\mathbb{Q}[\lambda_{0},...,\lambda_{m}]$ it follows
by \cite[Proposition 14.13]{Mas} that $P(\lambda)=0$ or,
\[
|P(\lambda)|\ge H(P(\lambda))^{-D}\ge(ln^{m+1}\cdot H^{(m+1)n})^{-D}\:.
\]
The lemma now follows by taking $M$ to be sufficiently large with
respect to $m$, $H$ and $D$.
\end{proof}

\subsection{A family of linear functionals}

Recall that according to the notation introduced in Section \ref{subsec:Some-notations}
we have that $\Phi_{t}=\{\varphi_{t,j}\}_{j=0}^{m}$ is an IFS on
$\mathbb{R}$ for each $t\in\mathbb{R}^{m}$. Given $n\ge1$ and $w_{1},w_{2}\in\Lambda^{n}$
let $L_{w_{1},w_{2}}:\mathbb{R}^{m}\rightarrow\mathbb{R}$ be such
that,
\[
L_{w_{1},w_{2}}(t)=\varphi_{t,w_{1}}(0)-\varphi_{t,w_{2}}(0)\text{ for each }t\in\mathbb{R}^{m}\:.
\]
Note that $L_{w_{1},w_{2}}$ is a linear functional on $\mathbb{R}^{m}$.
Set,
\[
\mathcal{L}_{n}=\{L_{w_{1},w_{2}}\::\:w_{1},w_{2}\in\Lambda^{n},\:w_{1}\ne w_{2}\text{ and }\lambda_{w_{1}}=\lambda_{w_{2}}\}\:.
\]

\begin{lem}
\label{lem:char of L}Let $n\ge1$ and $L\in\mathcal{L}_{n}$ be given.
Then there exist $P^{1},...,P^{m}\in\mathcal{P}(1,n)$ such that,
\[
L(t)=\sum_{j=1}^{m}P^{j}(\lambda)t_{j}\text{ for each }(t_{1},...,t_{m})=t\in\mathbb{R}^{m}\:.
\]
\end{lem}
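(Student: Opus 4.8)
The plan is to write $\varphi_{t,w}(0)$ explicitly as a linear form in $t$ whose coefficient of $t_{j}$ is a polynomial in $\lambda$ of controlled degree and size, and then to subtract the two forms coming from $w_{1}$ and $w_{2}$. Adopt the convention $t_{0}=0$, so that $\varphi_{t,j}(x)=\lambda_{j}x+t_{j}$ uniformly for all $0\le j\le m$. First I would unwind the composition: for $w=j_{1}\cdots j_{n}$, a direct induction on $n$ (using $\varphi_{t,w}=\varphi_{t,j_{1}}\circ\varphi_{t,j_{2}\cdots j_{n}}$) gives
\[
\varphi_{t,w}(0)=\sum_{k=1}^{n}\lambda_{j_{1}\cdots j_{k-1}}\,t_{j_{k}},
\]
where the empty product $\lambda_{j_{1}\cdots j_{0}}$ is read as $1$. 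Grouping the summands according to the value of $j_{k}$, and noting that the terms with $j_{k}=0$ vanish because $t_{0}=0$, this becomes $\varphi_{t,w}(0)=\sum_{j=1}^{m}Q_{w}^{j}(\lambda)\,t_{j}$, where for $1\le j\le m$ I set
\[
Q_{w}^{j}(X)=\sum_{k:\,j_{k}=j}X_{j_{1}}\cdots X_{j_{k-1}}\ \in\ \mathbb{Z}[X_{0},\dots,X_{m}].
\]

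Next I would verify that $Q_{w}^{j}\in\mathcal{P}(1,n)$. Each summand $X_{j_{1}}\cdots X_{j_{k-1}}$ is a monomial of total degree $k-1\le n-1$, so $\deg Q_{w}^{j}<n$. For the bound on the coefficients, the point to be careful about --- really the only nontrivial point in the whole argument --- is that the prefixes $j_{1}\cdots j_{k-1}$ occurring in the sum (over the admissible indices $k$ with $j_{k}=j$) have pairwise distinct lengths, and therefore give rise to pairwise distinct monomials; hence no two summands coincide and every coefficient of $Q_{w}^{j}$ equals $0$ or $1$. Thus $\ell_{\infty}(Q_{w}^{j})\le1$ and $Q_{w}^{j}\in\mathcal{P}(1,n)$.

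Finally, given $L=L_{w_{1},w_{2}}\in\mathcal{L}_{n}$, I would put $P^{j}=Q_{w_{1}}^{j}-Q_{w_{2}}^{j}$ for $1\le j\le m$. Then $\deg P^{j}<n$, and since both $Q_{w_{1}}^{j}$ and $Q_{w_{2}}^{j}$ have all coefficients in $\{0,1\}$, the difference $P^{j}$ has all coefficients in $\{-1,0,1\}$, so $\ell_{\infty}(P^{j})\le1$ and $P^{j}\in\mathcal{P}(1,n)$. By the formula above,
\[
L(t)=\varphi_{t,w_{1}}(0)-\varphi_{t,w_{2}}(0)=\sum_{j=1}^{m}\bigl(Q_{w_{1}}^{j}(\lambda)-Q_{w_{2}}^{j}(\lambda)\bigr)\,t_{j}=\sum_{j=1}^{m}P^{j}(\lambda)\,t_{j}
\]
for all $t=(t_{1},\dots,t_{m})\in\mathbb{R}^{m}$, which is exactly the asserted conclusion. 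Note that the hypotheses $w_{1}\ne w_{2}$ and $\lambda_{w_{1}}=\lambda_{w_{2}}$ from the definition of $\mathcal{L}_{n}$ are not used here; the representation holds for every pair of words in $\Lambda^{n}$.
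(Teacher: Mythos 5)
Your proposal is correct and follows essentially the same route as the paper: expand $\varphi_{t,w}(0)=\sum_{k}\lambda_{j_{1}\cdots j_{k-1}}t_{j_{k}}$ with the convention $t_{0}=0$, group by the value of $j_{k}$ so that the coefficient of $t_{j}$ is a sum of monomials coming from prefixes of pairwise distinct lengths (hence coefficients in $\{0,1\}$, and in $\{-1,0,1\}$ after subtracting), giving $P^{j}\in\mathcal{P}(1,n)$. Your explicit justification of the $\ell_{\infty}\le1$ bound via distinct monomial degrees is a detail the paper leaves implicit, but the argument is the same.
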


\begin{proof}
Given a word $q_{0}...q_{n-1}=w\in\Lambda^{n}$ write $\alpha(w)$
for the multi-index $(\alpha_{0},...,\alpha_{m})\in\mathbb{N}^{m+1}$
with,
\[
\alpha_{j}=\#\{0\le k<n\::\:q_{k}=j\}\text{ for each }0\le j\le m\:.
\]
Let $q_{0}...q_{n-1}=w_{1}\in\Lambda^{n}$ and $l_{0}...l_{n-1}=w_{2}\in\Lambda^{n}$
be with $L=L_{w_{1},w_{2}}$. Let $(t_{1},...,t_{m})=t\in\mathbb{R}^{m}$
and write $t_{0}=0$, then
\[
\varphi_{t,w_{1}}(0)=\sum_{k=0}^{n-1}\lambda_{q_{0}...q_{k-1}}t_{q_{k}}=\sum_{j=1}^{m}t_{j}\sum_{k=0}^{n-1}1_{\{q_{k}=j\}}\lambda^{\alpha(q_{0}...q_{k-1})},
\]
and similarly for $\varphi_{t,w_{2}}(0)$ with $l_{k}$ in place of
$q_{k}$. Thus,
\begin{eqnarray*}
L(t) & = & \varphi_{t,w_{1}}(0)-\varphi_{t,w_{2}}(0)\\
 & = & \sum_{j=1}^{m}t_{j}\sum_{k=0}^{n-1}\left(\lambda^{\alpha(q_{0}...q_{k-1})}1_{\{q_{k}=j\}}-\lambda^{\alpha(l_{0}...l_{k-1})}1_{\{l_{k}=j\}}\right)\:.
\end{eqnarray*}
Now since for each $1\le j\le m$,
\[
\sum_{k=0}^{n-1}\left(X^{\alpha(q_{0}...q_{k-1})}1_{\{q_{k}=j\}}-X^{\alpha(l_{0}...l_{k-1})}1_{\{l_{k}=j\}}\right)\in\mathcal{P}(1,n),
\]
the proof is complete.
\end{proof}

\section{\label{sec:Proof-of-Theorem}Proof of Theorem \ref{thm:main thm inductive}}

Let us recall the statement of Theorem \ref{thm:main thm inductive}.
\begin{thm*}
For every $1\le d<m$ the following holds. Let $(t_{j})_{j=1}^{m}=t\in\mathbb{R}^{dm}$
be such that,
\[
\mathrm{span}\{t_{1},...,t_{m}\}=\mathbb{R}^{d}\:.
\]
Supposed that $\Phi_{t}$ has no exact overlaps, then $\dim\mu_{t}\ge\beta$.
\end{thm*}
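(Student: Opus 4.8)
The plan is to prove Theorem \ref{thm:main thm inductive} by backward induction on $d$, following the strategy outlined in the introduction. Fix $1\le d<m$ and assume the statement holds for all $d<d'<m$ (the base of the induction being $d'=m-1$, where the hypothesis $\mathrm{span}\{t_1,\dots,t_m\}=\mathbb{R}^{m-1}$ together with Hochman's theorem in $\mathbb{R}^{m-1}$ forces the conclusion directly, since the only alternative would be a dimension drop that contradicts non-exactness via the algebraicity argument). Suppose for contradiction that $t\in\mathbb{R}^{dm}$ satisfies (\ref{eq:assumptions on t}) but $\dim\mu_t<\beta$. By Lemma \ref{lem:affine irreducibility}, $\Phi_t$ is affinely irreducible, so Theorem \ref{thm:follows from =00005BHo2=00005D} applies: for every $q\ge\chi$ we have $\lim_n \frac{1}{\chi n}H(\nu_t^{(n)},\mathcal{E}_{qn}\vee\mathcal{F})=\dim\mu_t$. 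Comparing with Corollary \ref{cor:ub on dim} and Lemma \ref{lem:approx of dim}, the entropy at the fine scale $qn$ refined by the scaling partition is not much larger than at scale $\chi n$; concretely, taking $q$ large, this says that for most pairs $w_1,w_2\in\Lambda^n$ with $\lambda_{w_1}=\lambda_{w_2}$ that are separated at scale $\chi n$, the points $\varphi_{t,w_1}(0)$ and $\varphi_{t,w_2}(0)$ are in fact much closer than $2^{-qn}$ — an averaging/pigeonholing of the entropy gap produces a large sub-collection $\mathcal{A}_n\subset\mathcal{L}_n$ with $|L((t_j^l)_{j=1}^m)|\le\delta^n$ for all coordinates $l$ and all $L\in\mathcal{A}_n$, exactly (\ref{eq:L is small}). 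Here "large" should mean that the linear functionals in $\mathcal{A}_n$ encode almost all the relevant entropy, so that if they all vanished on some $s$, then $\dim\mu_s<\beta$ too.

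Next I would bound $r_n:=\dim\mathrm{span}\,\mathcal{A}_n$. Each $L\in\mathcal{A}_n$ has the form $\sum_{j=1}^m P^j(\lambda)f_j$ with $P^j\in\mathcal{P}(1,n)$ by Lemma \ref{lem:char of L}. The vector space $U=\mathrm{span}\{(t_j^l)_{j=1}^m:1\le l\le d\}$ has dimension $d$ (by (\ref{eq:assumptions on t})) and, by (\ref{eq:L is small}), every $L\in\mathcal{A}_n$ is $\delta^n$-close to annihilating $U$; combined with Lemma \ref{lem:lb on val of poly} — which says a nonzero integer-coefficient polynomial evaluated at $\lambda$ is at least $M^{-n}$ in absolute value, so $\delta^n$-smallness forces exact vanishing once $\delta<1/M$ — one concludes that the annihilator $V_n$ of $\mathcal{A}_n$, which is a $\mathbb{Q}(\lambda)$-rational subspace, genuinely contains (a subspace $\delta^n$-close to) $U$, hence $r_n=m-\dim V_n\le m-d$. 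Then split into two cases. \emph{Case 1: $r_n=m-d$ for all large $n$.} Then $\dim V_n=d$ and $V_n$ is $\delta^n$-close to $U$, hence $V_n$ and $V_{n+1}$ are $\delta^n$-close; but both are spanned by vectors with coordinates in $\mathcal{P}(1,\cdot)(\lambda)$, and Lemma \ref{lem:lb on val of poly} (applied to suitable minors / Plücker coordinates, using (\ref{eq:control on product of poly}) to control degrees and heights of products) upgrades $\delta^n$-closeness of these rational subspaces to equality: $V_n=V_{n+1}$. Iterating, $V_n$ is eventually constant and equals its own limit, which must be $U$ (being $\delta^n$-close to $U$ for all large $n$); so $L((t_j^l)_{j=1}^m)=0$ for all $l$ and all $L\in\mathcal{A}_n$, which by definition of $\mathcal{L}_n$ means $\varphi_{t,w_1}=\varphi_{t,w_2}$ for some $w_1\ne w_2$ — an exact overlap, contradiction.

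\emph{Case 2: $r_{n_k}=r$ for some fixed $1\le r<m-d$ along a subsequence.} Set $d'=m-r$, so $d<d'<m$. For each $k$, since $V_{n_k}$ has dimension $d'$, pick $s_k\in([-1,1]^{d'})^m$ whose $m$ columns span $\mathbb{R}^{d'}$ and include the standard basis, with $L((s_{k,j}^l)_{j=1}^m)=0$ for all $l\le d'$ and all $L\in\mathcal{A}_{n_k}$ — i.e. the columns of $s_k$ lie in $V_{n_k}$; passing to a subsequence, $s_k\to s\in(\mathbb{R}^{d'})^m$, and $s$ still has spanning columns. Because $\mathcal{A}_{n_k}$ carries almost all the entropy, the vanishing (\ref{eq:sketch zero of functionals}) forces the fine-scale entropy of $\nu_{s_k}^{(n_k)}$ (refined by $\mathcal{F}$) to be essentially that at scale $\chi n_k$, so along the subsequence $\frac{1}{\chi n_k}H(\nu_{s_k}^{(n_k)},\mathcal{E}_{qn_k}\vee\mathcal{F})$ is bounded by roughly $\dim\mu_t<\beta$; combined with the upper bound of Corollary \ref{cor:ub on dim} and lower semicontinuity (Theorem \ref{thm:lower semi-cont of dim}), this yields $\dim\mu_s<\beta$. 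By the induction hypothesis, $\Phi_s$ has exact overlaps: there are $w_1\ne w_2$ in some $\Lambda^N$ with $\lambda_{w_1}=\lambda_{w_2}$ and $L_{w_1,w_2}((s_j^l)_{j=1}^m)=0$ for all $l\le d'$, i.e. $L_{w_1,w_2}$ vanishes on the $d'$-dimensional space $W=\mathrm{span}\{(s_j^l)_{j=1}^m\}$. Now comes the transfer back to $t$: $L_{w_1,w_2}=\sum_j P^j(\lambda)f_j$ with $P^j\in\mathcal{P}(1,N)$, and the $(m-d')=r$ linearly independent relations defining $W$ also have $\mathcal{P}(1,n_k)(\lambda)$-coefficients (they cut out $V_{n_k}$ for large $k$, and $W$ is the limit of these rational spaces — by the Case-1-type rigidity argument $W$ actually equals $V_{n_k}$ for large $k$). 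So $L_{w_1,w_2}$, viewed via its polynomial coefficients, lies in the $\mathbb{Q}(\lambda)$-span of the relations cutting out $V_{n_k}$; since $U\subset$ (the limit of $V_{n_k}$), and the coefficient identity is an exact polynomial identity forced by Lemma \ref{lem:lb on val of poly} together with the $\delta^n$-closeness of $U$ to $V_{n_k}$, we get $L_{w_1,w_2}((t_j^l)_{j=1}^m)=0$ for all $l\le d$, hence $\varphi_{t,w_1}=\varphi_{t,w_2}$: $\Phi_t$ has exact overlaps, the final contradiction.

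\textbf{Main obstacle.} The crux is making "$\mathcal{A}_n$ is large" quantitatively strong enough to serve two masters simultaneously: (i) that $\mathcal{A}_n$ captures enough of the entropy of $\nu_t^{(n)}$ so that its vanishing on a configuration $s$ genuinely drops $\dim\mu_s$ below $\beta$ (needed for the induction step in Case 2), and (ii) that the algebraic rigidity arguments go through — i.e., that the approximate linear relations (\ref{eq:L is small}), with their bounded-height polynomial coefficients, can be promoted to exact relations via Lemma \ref{lem:lb on val of poly}, and that limits of the rational subspaces $V_{n_k}$ are themselves honestly rational and eventually attained. Balancing $\delta$ against $M=M(m,\lambda)$, $q$, and the degree/height blow-up in (\ref{eq:control on product of poly}) when taking products and minors — so that every "$\delta^n$-small, therefore zero" step is legitimate while the entropy estimates still have room — is the delicate bookkeeping at the heart of the proof. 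The transfer argument in Case 2 (passing an exact overlap for $\Phi_s$ back to one for $\Phi_t$) is where these two threads must be woven together most carefully.
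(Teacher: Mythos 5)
Your proposal follows the same architecture as the paper's proof (indeed it tracks the sketch in the introduction): Hochman's theorem via Theorem \ref{thm:follows from =00005BHo2=00005D} produces the collections $\mathcal{A}_{n}$, one bounds the rank $r_{n}$ by $m-d$, and then splits into the rigidity case $r_{n}=m-d$ and the case $r_{n_{k}}=r<m-d$, where one builds $s_{k}\rightarrow s_{0}$ with $\dim\mu_{s_{0}}<\beta$ (entropy comparison, Corollary \ref{cor:ub on dim}, Theorem \ref{thm:lower semi-cont of dim}), invokes the induction hypothesis, and transfers the exact overlap back to $t$. Two of your stated justifications, however, do not work as written. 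First, in the rank bound you invoke Lemma \ref{lem:lb on val of poly} to say that $|L(t^{l})|\le\delta^{n}$ ``forces exact vanishing''. It cannot: $L(t^{l})=\sum_{j}P^{j}(\lambda)t_{j}^{l}$ involves the arbitrary real translations and is not the value of an integer polynomial at $\lambda$; if this step were legitimate, every member of $\mathcal{A}_{n}$ would annihilate $t$ outright and the whole theorem would follow in one line, with no case analysis. The algebraicity has to be applied only to quantities built from the coefficient vectors $a_{n,i}=(P_{n,i}^{j}(\lambda))_{j}$: the paper lower-bounds the Gram determinant $G(a_{n,1},...,a_{n,r_{n}})$, and later the maximal $r\times r$ minors $P_{k}^{J}(\lambda)$ (chosen maximal as in (\ref{eq:P^I is biggest}) precisely so that the Cramer ratios, i.e.\ the entries of $s_{k}$, lie in $[-1,1]$), using (\ref{eq:control on product of poly}) and Lemma \ref{lem:lb on val of poly}. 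This conditioning bound is what converts $|L_{n,i}(t^{l})|\le\delta^{n}$ into $\mathrm{dist}(t^{l},V_{n})=O(\delta^{n/2})$, which yields $r_{n}\le m-d$ and is also the quantitative engine of both cases; your proposal asserts this closeness but never supplies the mechanism.

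Second, in Case 2 your transfer leans on the claim that ``by the Case-1-type rigidity argument $W$ actually equals $V_{n_{k}}$ for large $k$''. There is no such rigidity available: in Case 1 eventual stabilization is forced because $V_{n}$ and $V_{n+1}$ are both exponentially close to the \emph{fixed} space $U$ of the \emph{same} dimension $d$, whereas in Case 2 one has $\dim V_{n_{k}}=d'>d$ and the only constraint is that each $V_{n_{k}}$ nearly contains the smaller space $U$; consecutive $V_{n_{k}}$ need not be close to one another and need not stabilize. Fortunately the claim is also unnecessary: all the transfer requires is $U\subseteq W$, i.e.\ the identity $t_{j}=\sum_{l=1}^{d'}s_{0,j}^{l}t_{l}$ for every $j$, which the paper proves as its final Claim by the same Cramer's-rule estimate ($\mathrm{dist}(t^{l},V_{n_{k}})=O(\delta^{n_{k}/2})$ together with $s_{k}\rightarrow s_{0}$). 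Once $L_{w_{1},w_{2}}$ vanishes on $W$ and $\lambda_{w_{1}}=\lambda_{w_{2}}$, it vanishes on each $t^{l}$ and $\varphi_{t,w_{1}}=\varphi_{t,w_{2}}$ follows; no rationality or eventual constancy of the spaces $V_{n_{k}}$ is needed. With these two repairs your outline coincides with the paper's argument.
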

\begin{proof}
The proof is carried out by backward induction on $d$. Let $1\le d<m$
and suppose that the theorem has been proven for each $d<d'<m$. Let
$(t_{j})_{j=1}^{m}=t\in\mathbb{R}^{dm}$ be such that,
\[
\mathrm{span}\{t_{1},...,t_{m}\}=\mathbb{R}^{d}\text{ and }\dim\mu_{t}<\beta\:.
\]
We need to show that $\Phi_{t}$ has exact overlaps.

Let $\{e_{1},...,e_{d}\}$ be the standard basis of $\mathbb{R}^{d}$.
For every invertible linear transformation $T:\mathbb{R}^{d}\rightarrow\mathbb{R}^{d}$,
\[
\Phi_{T(t)}=\{T\circ\varphi_{t,j}\circ T^{-1}\}_{j=0}^{m}\:.
\]
Thus, without loss of generality we may assume that,
\begin{equation}
\{e_{1},...,e_{d}\}\subset\{t_{1},...,t_{m}\}\:.\label{eq:st basis contained}
\end{equation}
There are two cases to consider during the proof. The containment
(\ref{eq:st basis contained}) will only be used when dealing with
the first case.

Let $0<\epsilon<\frac{1}{2}(\beta-\dim\mu_{t})$ and let $\delta>0$
be small with respect to $m$, $p$, $\lambda$ and $t$. In particular,
$\delta$ is assumed to be small with respect to the constant $M=M(m,\lambda)>1$
obtained in Lemma \ref{lem:lb on val of poly}. By Lemma \ref{lem:affine irreducibility}
it follows that $\Phi_{t}$ is affinely irreducible. Since $\delta$
is small with respect to $p$ and $\lambda$ we may assume that $-\log\delta>\chi$.
Thus, by Theorem \ref{thm:follows from =00005BHo2=00005D} there exists
$N\ge1$ such that,
\begin{equation}
\frac{1}{\chi n}H(\nu_{t}^{(n)},\mathcal{E}_{\left\lceil -\log\delta\right\rceil n}\vee\mathcal{F})<\dim\mu_{t}+\epsilon\text{ for all }n\ge N\:.\label{eq:small ent}
\end{equation}

For every $1\le j\le m$ let $t_{j}^{1},...,t_{j}^{d}\in\mathbb{R}$
be such that $t_{j}=(t_{j}^{l})_{l=1}^{d}$. For $1\le l\le d$ set
$t^{l}=(t_{j}^{l})_{j=1}^{m}$. Given $n\ge N$ let $\mathcal{A}_{n}$
be the collection of all $L\in\mathcal{L}_{n}$ with $|L(t^{l})|\le\delta^{n}$
for each $1\le l\le d$. By (\ref{eq:small ent}) and since,
\[
\chi n(\dim\mu_{t}+\epsilon)<\chi n(\beta-\epsilon)<nH(p),
\]
there exist $w_{1},w_{2}\in\Lambda^{n}$ such that $w_{1}\ne w_{2}$
but $\varphi_{t,w_{1}}$ and $\varphi_{t,w_{2}}$ belong to the same
atom of the partition $\mathcal{E}_{\left\lceil -\log\delta\right\rceil n}\vee\mathcal{F}$.
Thus $\lambda_{w_{1}}=\lambda_{w_{2}}$ and for each $1\le l\le d$,
\[
|L_{w_{1},w_{2}}(t^{l})|=|\varphi_{t^{l},w_{1}}(0)-\varphi_{t^{l},w_{2}}(0)|\le\delta^{n}\:.
\]
It follows that $L_{w_{1},w_{2}}\in\mathcal{A}_{n}$ and in particular
that $\mathcal{A}_{n}$ is nonempty.

Write $q_{n}$ for $|\mathcal{A}_{n}|$ and let $\{L_{n,i}\}_{i=1}^{q_{n}}$
be an enumeration of $\mathcal{A}_{n}$. By Lemma \ref{lem:char of L}
for every $1\le i\le q_{n}$ there exist $P_{n,i}^{1},...,P_{n,i}^{m}\in\mathcal{P}(1,n)$
such that,
\[
L_{n,i}(x)=\sum_{j=1}^{m}P_{n,i}^{j}(\lambda)x_{j}\text{ for every }(x_{1},...,x_{m})=x\in\mathbb{R}^{m}\:.
\]
For $1\le j\le m$ set $a_{n,i}^{j}=P_{n,i}^{j}(\lambda)$, then $a_{n,i}^{j}=O_{\lambda,m}(1)$
since $\lambda_{0},...,\lambda_{m}\in(-1,1)$ and $P_{n,i}^{j}\in\mathcal{P}(1,n)$.
Write $a_{n,i}=(a_{n,i}^{1},...,a_{n,i}^{m})$ and $B_{n}=\{1,...,q_{n}\}\times\{1,...,m\}$.
Denote by $r_{n}$ the rank of the matrix $(a_{n,i}^{j})_{(i,j)\in B_{n}}$.
For each $1\le i\le q_{n}$ and $1\le l\le d$ set $\rho_{n,i}^{l}=L_{n,i}(t^{l})$,
then $|\rho_{n,i}^{l}|\le\delta^{n}$ by the definition of $\mathcal{A}_{n}$.

If $a_{n,i}=0$ for some $n\ge N$ and $1\le i\le q_{n}$ then $L_{n,i}(t^{l})=0$
for each $1\le l\le d$. By the definition of $\mathcal{L}_{n}$ this
clearly implies that $\Phi_{t}$ has exact overlaps. Thus we may assume
that $a_{n,i}\ne0$, and in particular that $1\le r_{n}\le m$. In
fact we can get a better upper bound.
\begin{claim*}
For $n\ge N$ we have $r_{n}\le m-d$.
\end{claim*}
\begin{proof}
Without loss of generality we may assume that $\{a_{n,i}\}_{i=1}^{r_{n}}$
are linearly independent. Write $V=\mathrm{span}\{a_{n,1},...,a_{n,r_{n}}\}$
and let $V^{\perp}$ be the orthogonal complement of $V$ in $\mathbb{R}^{m}$.
Denote the orthogonal projections onto $V^{\perp}$ by $\pi_{V^{\perp}}$.

Given $k\ge1$ and vectors $x_{1},...,x_{k}\in\mathbb{R}^{m}$ write
$G(x_{1},...,x_{k})$ for their Gram determinant. That is,
\[
G(x_{1},...,x_{k})=\det\left(\bigl\langle x_{i},x_{j}\bigr\rangle\::\:1\le i,j\le k\right),
\]
where $\left\langle \cdot,\cdot\right\rangle $ is the standard inner
product of $\mathbb{R}^{m}$. Note that $G(x_{1},...,x_{k})$ is equal
to the square of the $k$-dimensional volume of the parallelotope
spanned by $x_{1},...,x_{k}$. Since $\{a_{n,i}\}_{i=1}^{r_{n}}$
are linearly independent we have $G(a_{n,1},...,a_{n,r_{n}})\ne0$.
Thus for every $x\in\mathbb{R}^{m}$,
\begin{equation}
\Vert\pi_{V^{\perp}}x\Vert^{2}=\frac{G(a_{n,1},...,a_{n,r_{n}},x)}{G(a_{n,1},...,a_{n,r_{n}})}\:.\label{eq:formula for dist}
\end{equation}

Recall that $a_{n,i}^{j}=P_{n,i}^{j}(\lambda)$ with $P_{n,i}^{j}\in\mathcal{P}(1,n)$
for each $(i,j)\in B_{n}$. Thus, by (\ref{eq:control on product of poly})
it follows that for every $1\le i_{1},i_{2}\le r_{n}$ there exists
$P\in\mathcal{P}(mn^{2(m+1)},2n)$ with $\bigl\langle a_{n,i_{1}},a_{n,i_{2}}\bigr\rangle=P(\lambda)$.
Since $r_{n}\le m$ another application of (\ref{eq:control on product of poly})
shows that there exists,
\[
D\in\mathcal{P}(m^{2m}n^{4m(m+1)},2mn),
\]
such that,
\[
G(a_{n,1},...,a_{n,r_{n}})=D(\lambda)\:.
\]
From this, $G(a_{n,1},...,a_{n,r_{n}})\ne0$ and Lemma \ref{lem:lb on val of poly}
we obtain,
\[
G(a_{n,1},...,a_{n,r_{n}})\ge(m^{2m}n^{4m(m+1)})^{-M}\cdot M^{-2mn}\:.
\]
Thus by assuming that $\delta$ is small enough with respect to $\lambda$
and $m$,
\begin{equation}
G(a_{n,1},...,a_{n,r_{n}})\ge\delta^{n/2}\:.\label{eq:lb on gram det}
\end{equation}

Since $a_{n,i}^{j}=O_{\lambda,m}(1)$ for $(i,j)\in B_{n}$,
\begin{equation}
\bigl\langle a_{n,i_{1}},a_{n,i_{2}}\bigr\rangle=O_{\lambda,m}(1)\text{ for each }1\le i_{1},i_{2}\le r_{n}\:.\label{eq:ub on inner products}
\end{equation}
For every $1\le l\le d$ and $1\le i\le r_{n}$,
\[
\delta^{n}\ge|L_{n,i}(t^{l})|=\left|\sum_{j=1}^{m}a_{n,i}^{j}t_{j}^{l}\right|=\left|\bigl\langle a_{n,i},t^{l}\bigr\rangle\right|\:.
\]
From this and (\ref{eq:ub on inner products}),
\[
G(a_{n,1},...,a_{n,r_{n}},t^{l})=\Vert t^{l}\Vert^{2}\cdot G(a_{n,1},...,a_{n,r_{n}})+O_{\lambda,m}(\delta^{n})\:.
\]
Hence from (\ref{eq:lb on gram det}) and (\ref{eq:formula for dist}),
\[
\Vert\pi_{V^{\perp}}t^{l}\Vert^{2}=\Vert t^{l}\Vert^{2}+O_{\lambda,m}(\delta^{n/2})\:.
\]

Thus there exist vectors $u_{1},...,u_{d}\in V^{\perp}$ with $\Vert t^{l}-u_{l}\Vert=O_{\lambda,m}(\delta^{n/4})$
for each $1\le l\le d$. From $\mathrm{span}\{t_{1},...,t_{m}\}=\mathbb{R}^{d}$
it follows that $\{t^{l}\}_{l=1}^{d}$ are linearly independent. Hence
by assuming that $\delta$ is small enough with respect to $\lambda$,
$m$ and $t$, we get that $\{u_{l}\}_{l=1}^{d}$ are also linearly
independent. From this it follows that,
\[
d\le\dim V^{\perp}=m-\dim V=m-r_{n},
\]
which completes the proof of the claim.
\end{proof}
Now there are two cases to consider.

\subsection*{First case}

Suppose first that $\underset{n}{\liminf}\:r_{n}=m-d$. By the last
claim and by increasing $N$ without changing the notation we may
assume that $r_{n}=m-d$ for all $n\ge N$. Recall that $\{e_{j}\}_{j=1}^{d}\subset\{t_{j}\}_{j=1}^{m}$,
and so we may assume for simplicity that,
\begin{equation}
t_{j}=e_{j}\text{ for each }1\le j\le d\:.\label{eq:basis at beginning}
\end{equation}
The assumption (\ref{eq:basis at beginning}) will only be made when
dealing with the present case, in which it will be clear that it makes
no difference.

Set $J=\{d+1,...,m\}$ and $I=\{1,...,m-d\}$. Without loss of generality
we may assume that $\{a_{n,i}\::\:i\in I\}$ are linearly independent
for each $n\ge N$. Given $J_{0}\subset\{1,...,m\}$ with $|J_{0}|=m-d$
write $P_{n}^{J_{0}}(X)$ for the determinant of the matrix $(P_{n,i}^{j}(X))_{(i,j)\in I\times J_{0}}$.
Note that $P_{n}^{J_{0}}(X)\in\mathbb{Z}[X_{0},...,X_{m}]$ and that
$P_{n}^{J_{0}}(\lambda)$ is equal to the determinant of $(a_{n,i}^{j})_{(i,j)\in I\times J_{0}}$.
\begin{claim*}
For every $n\ge N$ we have $P_{n}^{J}(\lambda)\ne0$.
\end{claim*}
\begin{proof}
Assume by contradiction that there exists $n\ge N$ for which the
claim is false. Write $k$ for the rank of the matrix $(a_{n,i}^{j})_{(i,j)\in I\times J}$.
Since the claim is false for $n$ we have $k<m-d$. Let $F\subset\{1,...,m\}$
be such that $|F|=m-d$, $|F\cap J|=k$ and $P_{n}^{F}(\lambda)\ne0$.

Let $1\le l\le d$ be with $l\in F$. For $j\in J\setminus F$ write
$F_{j}=(F\setminus\{l\})\cup\{j\}$. From $|F\cap J|=k$ it follows
that,
\begin{equation}
P_{n}^{F_{j}}(\lambda)=0\text{ for all }j\in J\setminus F\:.\label{eq:det of F_j=00003D0}
\end{equation}
Let $D$ be the determinant of the matrix obtained by replacing the
column vector $(a_{n,i}^{l})_{i=1}^{m-d}$ with the column vector
$(\rho_{n,i}^{l})_{i=1}^{m-d}$ in the matrix $(a_{n,i}^{j})_{(i,j)\in I\times F}$.
Since $|\rho_{n,i}^{l}|\le\delta^{n}$ for each $1\le i\le q_{n}$
and $a_{n,i}^{j}=O_{\lambda,m}(1)$ for each $(i,j)\in B_{n}$ it
follows that $D=O_{\lambda,m}(\delta^{n})$.

By (\ref{eq:basis at beginning}) it follows that $t_{j}^{l}=0$ for
each $j\in\{1,...,d\}\setminus\{l\}$. Thus for every $1\le i\le m-d$,
\[
\rho_{n,i}^{l}=L_{n,i}(t^{l})=\sum_{j=1}^{m}a_{n,i}^{j}t_{j}^{l}=\sum_{j\in F}a_{n,i}^{j}t_{j}^{l}+\sum_{j\in J\setminus F}a_{n,i}^{j}t_{j}^{l}\:.
\]
By solving these equations in $\{t_{j}^{l}\}_{j\in F}$ and applying
Cramer's rule we get,
\[
1=t_{l}^{l}=\frac{1}{P_{n}^{F}(\lambda)}\left(D-\sum_{j\in J\setminus F}t_{j}^{l}P_{n}^{F_{j}}(\lambda)\right)\:.
\]
From this, (\ref{eq:det of F_j=00003D0}) and $D=O_{\lambda,m}(\delta^{n})$
it follows that $P_{n}^{F}(\lambda)=O_{\lambda,m}(\delta^{n})$.

On the other hand, by (\ref{eq:control on product of poly}) and since
$P_{n,i}^{j}\in\mathcal{P}(1,n)$ for each $(i,j)\in B_{n}$,
\[
P_{n}^{F}\in\mathcal{P}((m!)n^{m(m+1)},mn)\:.
\]
From this, $P_{n}^{F}(\lambda)\ne0$ and Lemma \ref{lem:lb on val of poly},
\[
|P_{n}^{F}(\lambda)|\ge((m!)n^{m(m+1)})^{-M}\cdot M^{-mn}\:.
\]
Thus, if $\delta$ is taken to be small enough with respect to $\lambda$
and $m$ then $P_{n}^{F}(\lambda)=O_{\lambda,m}(\delta^{n})$ is not
possible. This contradiction completes the proof of the claim.
\end{proof}
For $j\in J$ and $1\le l\le d$ write $J_{j,l}=(J\setminus\{j\})\cup\{l\}$.
\begin{claim*}
Let $k\in J$ and $1\le l\le d$ be given, then $t_{k}^{l}=-P_{N}^{J_{k,l}}(\lambda)/P_{N}^{J}(\lambda)$.
\end{claim*}
\begin{proof}
Let $n\ge N$ and write $D$ for the determinant of the matrix obtained
by replacing the column vector $(a_{n,i}^{k})_{i=1}^{m-d}$ with the
column vector $(\rho_{n,i}^{l})_{i=1}^{m-d}$ in the matrix $(a_{n,i}^{j})_{(i,j)\in I\times J}$.
Since $|\rho_{n,i}^{l}|\le\delta^{n}$ for each $1\le i\le q_{n}$
and $a_{n,i}^{j}=O_{\lambda,m}(1)$ for each $(i,j)\in B_{n}$, it
follows that $D=O_{\lambda,m}(\delta^{n})$. By (\ref{eq:basis at beginning})
it follows that for every $1\le i\le m-d$,
\[
\rho_{n,i}^{l}=L_{n,i}(t^{l})=\sum_{j=1}^{m}a_{n,i}^{j}t_{j}^{l}=a_{n,i}^{l}+\sum_{j=d+1}^{m}a_{n,i}^{j}t_{j}^{l}\:.
\]
By solving these equations in $\{t_{j}^{l}\}_{j\in J}$ and applying
Cramer's rule, we get
\begin{equation}
t_{k}^{l}=\frac{1}{P_{n}^{J}(\lambda)}(D-P_{n}^{J_{k,l}}(\lambda))\;.\label{eq:by Cramer t^l_k =00003D}
\end{equation}

By (\ref{eq:control on product of poly}) and since $P_{n,i}^{j}\in\mathcal{P}(1,n)$
for each $(i,j)\in B_{n}$,
\[
P_{n}^{J}\in\mathcal{P}((m!)n^{m(m+1)},mn)\:.
\]
Thus, from $P_{n}^{J}(\lambda)\ne0$ and Lemma \ref{lem:lb on val of poly},
\[
|P_{n}^{J}(\lambda)|\ge((m!)n^{m(m+1)})^{-M}\cdot M^{-mn}\:.
\]
From this, $D=O_{\lambda,m}(\delta^{n})$ and by assuming that $\delta$
is small enough with respect to $\lambda$ and $m$, we get $|D/P_{n}^{J}(\lambda)|\le\delta^{n/2}$.
Hence from (\ref{eq:by Cramer t^l_k =00003D}) it follows that for
every $n\ge N$,
\[
\left|t_{k}^{l}+\frac{P_{n}^{J_{k,l}}(\lambda)}{P_{n}^{J}(\lambda)}\right|\le\delta^{n/2},
\]
and so,
\[
\left|\frac{P_{n}^{J_{k,l}}(\lambda)}{P_{n}^{J}(\lambda)}-\frac{P_{n+1}^{J_{k,l}}(\lambda)}{P_{n+1}^{J}(\lambda)}\right|\le2\delta^{n/2}\:.
\]
From this and $P_{n}^{J}(\lambda),P_{n+1}^{J}(\lambda)=O_{\lambda,m}(1)$,
we get
\[
\left|P_{n}^{J_{k,l}}(\lambda)P_{n+1}^{J}(\lambda)-P_{n+1}^{J_{k,l}}(\lambda)P_{n}^{J}(\lambda)\right|=O_{\lambda,m}(\delta^{n/2})\;.
\]

Now set,
\[
Q_{n}(X)=P_{n}^{J_{k,l}}(X)P_{n+1}^{J}(X)-P_{n+1}^{J_{k,l}}(X)P_{n}^{J}(X),
\]
then $Q_{n}(\lambda)=O_{\lambda,m}(\delta^{n/2})$. Moreover, by (\ref{eq:control on product of poly})
and since,
\[
P_{n}^{J},P_{n}^{J_{k,l}},P_{n+1}^{J},P_{n+1}^{J_{k,l}}\in\mathcal{P}((m!)(n+1)^{m(m+1)},m(n+1)),
\]
we have,
\[
Q_{n}\in\mathcal{P}(m^{5(m+1)}(n+1)^{4m(m+1)},2m(n+1))\:.
\]
Thus from Lemma \ref{lem:lb on val of poly} it follows that $Q_{n}(\lambda)=0$
or,
\[
|Q_{n}(\lambda)|\ge(m^{5(m+1)}(n+1)^{4m(m+1)})^{-M}\cdot M^{-2m(n+1)}\:.
\]
From this, $Q_{n}(\lambda)=O_{\lambda,m}(\delta^{n/2})$ and by assuming
that $\delta$ is small enough with respect to $\lambda$ and $m$,
it follows that we must have $Q_{n}(\lambda)=0$. We have thus shown
that for every $n\ge N$,
\[
\left|t_{k}^{l}+\frac{P_{n}^{J_{k,l}}(\lambda)}{P_{n}^{J}(\lambda)}\right|\le\delta^{n/2}\text{ and }\frac{P_{n}^{J_{k,l}}(\lambda)}{P_{n}^{J}(\lambda)}=\frac{P_{n+1}^{J_{k,l}}(\lambda)}{P_{n+1}^{J}(\lambda)},
\]
which clearly completes the proof of the claim.
\end{proof}
From the last claim and (\ref{eq:basis at beginning}) it follows
that for every $1\le l\le d$ and $1\le i\le m-d$,
\begin{multline*}
L_{N,i}(t^{l})=\sum_{j=1}^{m}a_{N,i}^{j}t_{j}^{l}=a_{N,i}^{l}-\sum_{j=d+1}^{m}a_{N,i}^{j}\frac{P_{N}^{J_{j,l}}(\lambda)}{P_{N}^{J}(\lambda)}\\
=a_{N,i}^{l}-\sum_{j=d+1}^{m}a_{N,i}^{j}\frac{\det\left((a_{N,i_{0}}^{j_{0}})_{(i_{0},j_{0})\in I\times J_{j,l}}\right)}{\det\left((a_{N,i_{0}}^{j_{0}})_{(i_{0},j_{0})\in I\times J}\right)}\:.
\end{multline*}
Thus by Cramer's rule we have $L_{N,i}(t^{l})=0$. Since $L_{N,1}\in\mathcal{L}_{N}$
there exist $w_{1},w_{2}\in\Lambda^{N}$ such that $w_{1}\ne w_{2}$,
$\lambda_{w_{1}}=\lambda_{w_{2}}$ and $L_{N,1}=L_{w_{1},w_{2}}$.
For each $1\le l\le d$,
\[
0=L_{w_{1},w_{2}}(t^{l})=\varphi_{t^{l},w_{1}}(0)-\varphi_{t^{l},w_{2}}(0),
\]
which together with $\lambda_{w_{1}}=\lambda_{w_{2}}$ implies $\varphi_{t,w_{1}}=\varphi_{t,w_{2}}$.
We have thus shown that when
\[
\underset{n}{\liminf}\:r_{n}=m-d
\]
the IFS $\Phi_{t}$ has exact overlaps. This completes the treatment
of the first case.

\subsection*{Second case}

Suppose next that for
\[
r=\underset{n}{\liminf}\:r_{n}
\]
we have $1\le r<m-d$ (recall that $r_{n}\ge1$ for all $n\ge N$).
Let $\{n_{k}\}_{k\ge1}$ be an increasing sequence of positive integers
with $n_{1}\ge N$ and $r_{n_{k}}=r$ for all $k\ge1$. Write $I=\{1,...,r\}$,
then without loss of generality we may assume that the row vectors
$\{a_{n_{k},i}\}_{i\in I}$ are linearly independent for all $k\ge1$.

Write $d'=m-r$ and $J=\{d'+1,...,m\}$, and note that $d<d'<m$.
Given $J_{0}\subset\{1,...,m\}$ with $|J_{0}|=r$ write $P_{k}^{J_{0}}(X)$
for the determinant of the matrix $(P_{n_{k},i}^{j}(X))_{(i,j)\in I\times J_{0}}$.
Since we no longer assume (\ref{eq:basis at beginning}), we may without
loss of generality assume that for infinitely many integers $k\ge1$,
\begin{equation}
|P_{k}^{J}(\lambda)|\ge|P_{k}^{J_{0}}(\lambda)|\text{ for all }J_{0}\subset\{1,...,m\}\text{ with }|J_{0}|=r\:.\label{eq:P^I is biggest}
\end{equation}
By moving to a subsequence without changing the notation we may suppose
that (\ref{eq:P^I is biggest}) holds for all $k\ge1$. Note that
since $\{a_{n_{k},i}\}_{i\in I}$ are independent this implies that
$P_{k}^{J}(\lambda)\ne0$.

For every $j\in J$ and $1\le l\le d'$ set $J_{j,l}=(J\setminus\{j\})\cup\{l\}$.
By moving to a subsequence without changing the notation we may assume
that there exist numbers $s_{0,j}^{l}\in[-1,1]$ such that,
\begin{equation}
s_{0,j}^{l}=-\underset{k}{\lim}\:\frac{P_{k}^{J_{j,l}}(\lambda)}{P_{k}^{J}(\lambda)}\text{ for all }j\in J\text{ and }1\le l\le d'\:.\label{eq:lim of det exist}
\end{equation}
For $1\le j\le d'$ and $1\le l\le d'$ set $s_{0,j}^{l}=\delta_{j,l}$,
where $\delta_{j,l}$ is the Kronecker delta. For each $1\le j\le m$
write $s_{0,j}=(s_{0,j}^{l})_{l=1}^{d'}$, and note that $(s_{0,j})_{j=1}^{d'}$
is the standard basis of $\mathbb{R}^{d'}$. Set $s_{0}=(s_{0,j})_{j=1}^{m}$,
so that $s_{0}\in\mathbb{R}^{d'm}$. For $1\le l\le d'$ write $s_{0}^{l}=(s_{0,j}^{l})_{j=1}^{m}$.

For $k\ge1$, $1\le j\le d'$ and $1\le l\le d'$ set $s_{k,j}^{l}=s_{0,j}^{l}$.
For $j\in J$ and $1\le l\le d'$ write,
\[
s_{k,j}^{l}=-P_{k}^{J_{j,l}}(\lambda)/P_{k}^{J}(\lambda)\:.
\]
For each $1\le j\le m$ set $s_{k,j}=(s_{k,j}^{l})_{l=1}^{d'}$ and
write $s_{k}=(s_{k,j})_{j=1}^{m}\in\mathbb{R}^{d'm}$. For $1\le l\le d'$
write $s_{k}^{l}=(s_{k,j}^{l})_{j=1}^{m}$. Since $(s_{k,j})_{j=1}^{d'}$
is the standard basis of $\mathbb{R}^{d'}$ for every $i\in I$ we
have,
\begin{multline*}
L_{n_{k},i}(s_{k}^{l})=\sum_{j=1}^{m}a_{n_{k},i}^{j}s_{k,j}^{l}=a_{n_{k},i}^{l}-\sum_{j=d'+1}^{m}a_{n_{k},i}^{j}\frac{P_{k}^{J_{j,l}}(\lambda)}{P_{k}^{J}(\lambda)}\\
=a_{n_{k},i}^{l}-\sum_{j=d'+1}^{m}a_{n_{k},i}^{j}\frac{\det\left((a_{n_{k},i_{0}}^{j_{0}})_{(i_{0},j_{0})\in I\times J_{j,l}}\right)}{\det\left((a_{n_{k},i_{0}}^{j_{0}})_{(i_{0},j_{0})\in I\times J}\right)}\:.
\end{multline*}
Thus by Cramer's rule,
\begin{equation}
L_{n_{k},i}(s_{k}^{l})=0\text{ for each }i\in I\text{ and }1\le l\le d'\:.\label{eq:functinals are 0}
\end{equation}
Since $\{a_{n_{k},i}\}_{i\in I}$ are independent and $r_{n_{k}}=r$,
\[
\{a_{n_{k},i}\}_{i=1}^{q_{n_{k}}}\subset\mathrm{span}\{a_{n_{k},i}\}_{i\in I}\:.
\]
Hence by (\ref{eq:functinals are 0}),
\begin{equation}
L_{n_{k},i}(s_{k}^{l})=0\text{ for all }1\le i\le q_{n_{k}}\text{ and }1\le l\le d'\:.\label{eq:functionals are 0 for all}
\end{equation}

Let $w_{1},w_{2}\in\Lambda^{n_{k}}$ be such that $w_{1}\ne w_{2}$
but $\varphi_{t,w_{1}}$ and $\varphi_{t,w_{2}}$ belong to the same
atom of the partition $\mathcal{E}_{\left\lceil -\log\delta\right\rceil n_{k}}\vee\mathcal{F}$.
By the the definition of $\mathcal{F}$ we have $\lambda_{w_{1}}=\lambda_{w_{2}}$
and by the definition of $\mathcal{E}_{\left\lceil -\log\delta\right\rceil n_{k}}$,
\[
|L_{w_{1},w_{2}}(t^{l})|=|\varphi_{t^{l},w_{1}}(0)-\varphi_{t^{l},w_{2}}(0)|\le\delta^{n_{k}}
\]
for each $1\le l\le d$. This shows that $L_{w_{1},w_{2}}\in\mathcal{A}_{n_{k}}$
and so by (\ref{eq:functionals are 0 for all}),
\[
\varphi_{s_{k}^{l},w_{1}}(0)-\varphi_{s_{k}^{l},w_{2}}(0)=L_{w_{1},w_{2}}(s_{k}^{l})=0
\]
for each $1\le l\le d'$. Since $\lambda_{w_{1}}=\lambda_{w_{2}}$
it follows that $\varphi_{s_{k},w_{1}}=\varphi_{s_{k},w_{2}}$.

We have thus shown that $\varphi_{s_{k},w_{1}}=\varphi_{s_{k},w_{2}}$
for every $w_{1},w_{2}\in\Lambda^{n_{k}}$ such that $\varphi_{t,w_{1}}$
and $\varphi_{t,w_{2}}$ belong to the same atom of $\mathcal{E}_{\left\lceil -\log\delta\right\rceil n_{k}}\vee\mathcal{F}$.
This clearly implies that,
\[
H(\nu_{s_{k}}^{(n_{k})})\le H(\nu_{t}^{(n_{k})},\mathcal{E}_{\left\lceil -\log\delta\right\rceil n_{k}}\vee\mathcal{F})\:.
\]
From this and (\ref{eq:small ent}),
\[
\frac{1}{\chi n_{k}}H(\nu_{s_{k}}^{(n_{k})})<\dim\mu_{t}+\epsilon<\beta-\epsilon\:.
\]
Hence $\dim\mu_{s_{k}}<\beta-\epsilon$ by Corollary \ref{cor:ub on dim}.

Note that by (\ref{eq:lim of det exist}) and the definitions of $s_{0}$
and $\{s_{k}\}_{k\ge1}$ it follows that $s_{k}\overset{k}{\rightarrow}s_{0}$.
Thus from Theorem \ref{thm:lower semi-cont of dim} we get that $\dim\mu_{s_{0}}\le\beta-\epsilon$.
Also, since $(s_{0,j})_{j=1}^{d'}$ is the standard basis of $\mathbb{R}^{d'}$,
\[
\mathrm{span}\{s_{0,1},...,s_{0,m}\}=\mathbb{R}^{d'}\:.
\]
Now because $d<d'<m$ we may use the induction hypothesis in order
to conclude that $\Phi_{s_{0}}$ has exact overlaps. It remains to
show that this implies that $\Phi_{t}$ also has exact overlaps.
\begin{claim*}
For every $1\le j\le m$ we have $t_{j}=\sum_{l=1}^{d'}s_{0,j}^{l}t_{l}$.
\end{claim*}
\begin{proof}
For every $1\le j\le d'$,
\[
\sum_{l=1}^{d'}s_{0,j}^{l}t_{l}=\sum_{l=1}^{d'}\delta_{j,l}t_{l}=t_{j}\:.
\]
Thus the claim holds for $1\le j\le d'$.

Let $j_{0}\in J$, $1\le l\le d$ and $k\ge1$ be given. Write $D_{k}$
for the determinant of the matrix obtained by replacing the column
vector $(a_{n_{k},i}^{j_{0}})_{i=1}^{r}$ with the column vector $(\rho_{n_{k},i}^{l})_{i=1}^{r}$
in the matrix $(a_{n_{k},i}^{j})_{(i,j)\in I\times J}$. Since $|\rho_{n_{k},i}^{l}|\le\delta^{n_{k}}$
for each $1\le i\le q_{n_{k}}$ and $a_{n_{k},i}^{j}=O_{\lambda,m}(1)$
for each $(i,j)\in B_{n_{k}}$, it follows that $D_{k}=O_{\lambda,m}(\delta^{n_{k}})$.
By (\ref{eq:control on product of poly}) and since $P_{n_{k},i}^{j}\in\mathcal{P}(1,n_{k})$
for each $(i,j)\in B_{n_{k}}$,
\[
P_{k}^{J}\in\mathcal{P}((m!)n_{k}^{m(m+1)},mn_{k})\:.
\]
From this, $P_{k}^{J}(\lambda)\ne0$ and Lemma \ref{lem:lb on val of poly},
\[
|P_{k}^{J}(\lambda)|\ge((m!)n_{k}^{m(m+1)})^{-M}\cdot M^{-mn_{k}}\:.
\]
Thus, by taking $\delta$ to be small enough with respect to $\lambda$
and $m$ we may assume that,
\[
|D_{k}/P_{k}^{J}(\lambda)|\le\delta^{n_{k}/2}\:.
\]

For every $1\le i\le r$,
\[
\rho_{n_{k},i}^{l}=L_{n_{k},i}(t^{l})=\sum_{j=1}^{m}a_{n_{k},i}^{j}t_{j}^{l}\:.
\]
Thus by solving these equations in $\{t_{j}^{l}\}_{j\in J}$ and applying
Cramer's rule,
\[
t_{j_{0}}^{l}=\frac{1}{P_{k}^{J}(\lambda)}(D_{k}-\sum_{j=1}^{d'}t_{j}^{l}P_{k}^{J_{j_{0},j}}(\lambda))\;.
\]
From this, $|D_{k}/P_{k}^{J}(\lambda)|\le\delta^{n_{k}/2}$ and (\ref{eq:lim of det exist}),
\[
t_{j_{0}}^{l}=-\underset{k}{\lim}\:\sum_{j=1}^{d'}t_{j}^{l}\frac{P_{k}^{J_{j_{0},j}}(\lambda)}{P_{k}^{J}(\lambda)}=\sum_{j=1}^{d'}s_{0,j_{0}}^{j}t_{j}^{l}\:.
\]
Since this holds for all $1\le l\le d$ it follows that $t_{j_{0}}=\sum_{j=1}^{d'}s_{0,j_{0}}^{j}t_{j}$,
which completes the proof of the claim.
\end{proof}
Now since $\Phi_{s_{0}}$ has exact overlaps there exist $n\ge1$
and $w_{1},w_{2}\in\Lambda^{n}$ such that $w_{1}\ne w_{2}$, $\lambda_{w_{1}}=\lambda_{w_{2}}$
and $L_{w_{1},w_{2}}(s_{0}^{l})=0$ for all $1\le l\le d'$. Write
$L=L_{w_{1},w_{2}}$ and let $c_{1},...,c_{m}\in\mathbb{R}$ be such
that,
\[
L(x)=\sum_{j=1}^{m}c_{j}x_{j}\text{ for }(x_{j})_{j=1}^{m}=x\in\mathbb{R}^{m}\:.
\]
Since $L(s_{0}^{l})=0$ for each $1\le l\le d'$ and by the last claim,
\[
0=\sum_{l=1}^{d'}L(s_{0}^{l})t_{l}=\sum_{j=1}^{m}c_{j}\sum_{l=1}^{d'}s_{0,j}^{l}t_{l}=\sum_{j=1}^{m}c_{j}t_{j}\:.
\]
This implies that $L(t^{l})=0$ for each $1\le l\le d$, which shows
that $\varphi_{t,w_{1}}=\varphi_{t,w_{2}}$.

We have thus shown that also in the second case $\Phi_{t}$ has exact
overlaps, which completes the proof of the theorem.
\end{proof}

$\newline$$\newline$\textsc{Centre for Mathematical Sciences,\newline Wilberforce Road, Cambridge CB3 0WA, UK}$\newline$$\newline$\textit{E-mail: }
\texttt{ariel.rapaport@mail.huji.ac.il}
\end{document}